\newcommand{\R}{{\mathbb R}}
\newcommand{\N}{{\mathbb N}}
\newcommand {\menos}{\backslash}
\newcommand {\gd}{\displaystyle}
\newcommand {\rt}{\rightarrow}
\numberwithin{equation}{section}
\newtheorem{theorem}{Theorem}[section]
\newtheorem{lemma}[theorem]{Lemma}
\newtheorem{proposition}[theorem]{Proposition}
\theoremstyle{definition}
\newtheorem{remark}[theorem]{Remark}
\newtheorem*{remarkk}{Remark}
\title{
}
\newtheorem*{namedtheorem}{\theoremname}
  \newcommand{\theoremname}{testing}
  \newenvironment{named}[1]{
     \renewcommand{\theoremname}{#1}
     \begin{namedtheorem}}
     {\end{namedtheorem}}
\newcommand{\parameter}{\mu}
\begin{document}
 
\title[Blow up of the solutions of the Swift-Hohenberg equation]{On the finite space blow up of the solutions of the Swift-Hohenberg equation}

\thanks{V. Ferreira Jr is  supported by FAPESP \#2012/23741-3 grant. E. Moreira dos Santos is partially supported by CNPq \#309291/2012-7 grant and FAPESP \#2014/03805-2 grant.}

\author{Vanderley Ferreira Jr}
\address{Vanderley A. Ferreira Junior \newline \indent Instituto de Ciências Matemáticas e de Computação \newline \indent Universidade de São Paulo \newline \indent
Caixa Postal 668, CEP 13560-970 - S\~ao Carlos - SP - Brazil}
\email{vanderley.cn@gmail.com}

\author{Ederson Moreira dos Santos}
\address{Ederson Moreira dos Santos \newline \indent Instituto de Ciências Matemáticas e de Computação \newline \indent Universidade de São Paulo \newline \indent
Caixa Postal 668, CEP 13560-970 - S\~ao Carlos - SP - Brazil}
\email{ederson@icmc.usp.br}

\date{\today}

\subjclass[2010]{34A12; 34C11; 34C25}
\keywords{Fourth order differential equation; Blow-up; Periodic solutions; Swift-Hohenberg equation; Travelling wave solutions; Beam equation}

\begin{abstract}
The aim of this paper is to study the finite space blow up of the solutions for a class of fourth order differential equations. Our results answer a conjecture in [F. Gazzola and R. Pavani. Wide oscillation finite time blow up for solutions to nonlinear fourth order differential equations. Arch. Ration. Mech. Anal., 207(2):717–752, 2013] and they have implications on the nonexistence of beam oscillation given by traveling wave profile at low speed propagation.
\end{abstract}
\maketitle

\section{Introduction}

In this paper we consider the equation
\begin{equation} \label{edo1}
w''''(s) + k w''(s)+ f(w(s)) = 0, \quad s \in \mathbb{R},
\end{equation}
where $k \in \mathbb{R}$ and $f: \R \to \R$ is a locally Lipschitz function. When $k$ is positive \eqref{edo1} is referred to as the stationary 1-D Swift-Hohenberg equation whereas when $k$ is negative it is usually called the stationary extended Fisher-Kolmogorov equation (eFK equation). There is a large and diverse literature on the equation \eqref{edo1} and it shows to be a difficult task to mention all of them.  However, we mention the monographs by Collet and Eckmann \cite{colleteckmann}, Cross and Hohenberg \cite{crosshohenberg1993} and Peletier and Troy \cite{peletiertroy} and the papers \cite{swifthohenberg1977, MckennaWalter1987, MckennaWalter1990, lazermckenna, ChenMckenna1997, peletiertroy2000, bergpeletiertroy2001, gazzolapavani2011, gazzolapavani2012, gazzolapavani2013, gazzolabridge} that present many different applications of \eqref{edo1} and that consider nonlinearities $f$ satisfying the same hypotheses as in this paper. 

In particular, the study of \eqref{edo1} with $k>0$ shows to be important in applications. For example, with $k = c^2$, the existence of a solution to \eqref{edo1} corresponds to the existence of a traveling wave solution $u(x,t) = w(x-ct)$ of the nonlinear beam equation
\begin{equation}\label{eqtempo}
u_{tt} + u_{xxxx} + f(u) = 0,  \quad x \in (0,L), \quad t \in \R.
\end{equation}

Throughout this paper we will assume that 
\begin{equation}\label{f1}
f \in {\rm{Lip}}_{{\rm{loc}}}(\R), \quad f(t)t> 0, \ \forall \ t\neq 0,
\end{equation}
and we stress that all of our results apply to the prototype equation
\begin{equation}\label{edo} 
\left\{
\begin{array}{l}
w'''' (s)+ k w''(s)+ \alpha |w(s)|^{q-1}w(s)+|w(s)|^{p-1} w(s) = 0, \quad s \in \R, \\ 
p > q\geq 1,  \ \alpha \geq 0, \ k>0.
\end{array}
\right.
\end{equation}

Let us describe the motivation to write this paper. Gazzola and Pavani \cite{gazzolapavani2013} obtained a careful description of the oscillation of the solutions of the eFK equation, that is \eqref{edo1} in the case of $k \leq0$, and they proved the following theorem.

\begin{named}{Theorem A (\cite[Theorem 2]{gazzolapavani2013})}
\label{thGP2013}
Let $k\leq 0$ and assume that $f$ satisfies \eqref{f1},
\begin{equation}\label{f12}
f \in \mathcal{C}^2(\mathbb{R}\backslash\{0\}), \quad  f''(t)t>0, \ \forall \ t>0,  \quad \liminf_{t\to\pm\infty}|f''(t)|>0,
\end{equation}
and
\begin{align}
&\exists \ p > q \geq 1 , \ \alpha \geq 0, \ 0 < \rho \leq \beta, \ s.t.\notag \\ \
& \rho |t|^{p+1} \leq f(t)t\leq \alpha  |t|^{q+1}+\beta  |t|^{p+1}, \ \quad \forall \ t\in\mathbb{R} \label{f2int}.
\end{align}
Let $w = w(s)$ be a local solution of \eqref{edo1} in a neighborhood of $s =0$ and defined on the maximal interval on the right $[0,R)$. If the initial data satisfy
\begin{equation}\label{dados iniciais GP}
w'(0)w''(0)-w(0)w'''(0)-kw(0)w'(0)> 0,
\end{equation}
then $R<+\infty$ and
\[ \limsup_{s \to R} w(s) = + \infty, \quad  \liminf_{s \to R} w(s) = - \infty .\]
  \end{named}

Then, in the same paper, the authors conjectured that the same result should hold for the Swift-Hohenberg equation; cf. Conjecture B below. However, as pointed out in \cite[p. 728]{gazzolapavani2013}, the arguments used to prove Theorem A, which are based on certain auxiliary functions $G$ and $H$ as well as on some nice properties of the solution $w$, namely \cite[Lemmas 10 and 11]{gazzolapavani2013}, do not apply to prove such conjecture. In this direction let us also quote \cite[p. 721]{gazzolapavani2013}:

\begin{em}
\begin{flushleft}
``It would be interesting to have a similar statement {\rm{(similar to Theorem A)}} when $k>0$, since this would allow us to prove Conjecture 4 {\rm{(Conjecture B below)}}. However, if $k > 0$, there are a couple of important tools which are missing and the proof of Theorem A cannot be extended in a simple way. In any case, numerical results suggest that a result similar to Theorem A also holds for $k>0$.''
\end{flushleft}
\end{em}

\begin{named}{Conjecture B (\cite[Conjecture 4]{gazzolapavani2013})}\label{conjecture4}
Theorem A should also hold for $k>0$.
\end{named}

According to the above comments, many difficulties arise as one tries to solve Conjecture B. Here we undertake this task. To accomplish that we replace the auxiliary functions $G$ and $H$ conveniently and we present a threshold $k_f$ for the blow up.
We prove, by using different arguments, that our functions $G$ and $H$ and the solution $w$ have the same nice monotonicity properties as the corresponding functions in \cite{gazzolapavani2013}, provided that $0 < k \leq k_f$. Going further on this direction, we stress that the wild behavior of the solution $w$ of \eqref{edo} with $f(t) = t^3+ t$,  initial data $(0.8,0,0,0)$ and $k =3.5$ observed in \cite[Fig. 3]{gazzolapavani2013} occurs because $3.5$ is above the corresponding threshold $k_f=2$. For example, in the case  $0 < k \leq 2$, the solution of the same initial value problem has a nicer behavior and the absolute value of the critical values of $w$ are monotone increasing; cf. \eqref{l3} and Fig. \ref{desenho} below. 

\begin{figure}[h]\label{desenho}
\includegraphics{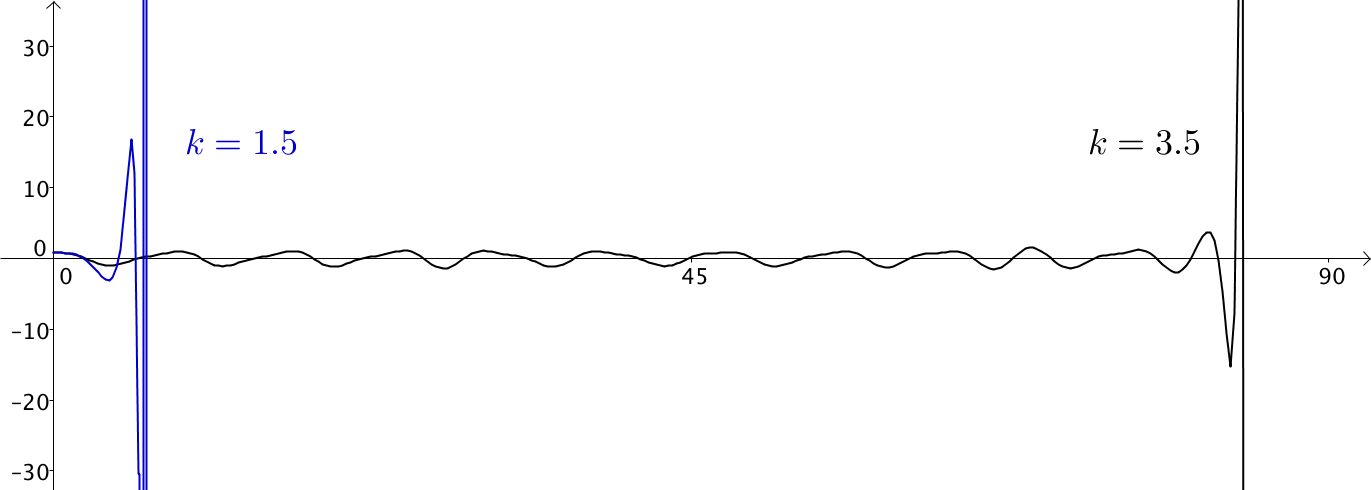}
\vspace{-.3cm}
\caption{Plot of solutions for $k=1.5$ and $k=3.5$.}
\end{figure}
\noindent Note that the initial data in this case satisfies $H(0)=0$, with $H$ as in this paper. Indeed, even in the case that $f$ is not odd symmetric, inequality \eqref{l3} says that the sequences of maxima and minima of $w$ are strictly monotone (assuming all of the hypotheses in Theorem \ref{teoremaprincipal}). In addition, we believe that the threshold $k_f$ might have some intrinsic physical meaning and that our results might contribute towards the understanding of suspension bridges oscillation phenomena. See our comments below about what we call here critical speed for traveling wave solutions of \eqref{eqtempo}.

It seems that the statements in \cite[p. 721 and 728]{gazzolapavani2013} corroborate the impression of the authors of \cite{smetsvanderberg} about the Swift-Hohenberg equation:
\begin{em}
\begin{flushleft}
``In some sense, the Fisher–Kolmogorov situation is simpler to deal with... Concerning the eFK equation, the situation is rather deeply understood... Much less rigorous results exist for the Swift–Hohenberg case.''
\end{flushleft}
\end{em}

Latterly Gazzola and Karageorgis \cite{GazzolaKarageorgis} presented an improvement on the results of Theorem A allowing more general nonlinearities $f$ and initial data possibly violating \eqref{dados iniciais GP}.

\begin{named}{Theorem C (\cite[Theorem 3]{GazzolaKarageorgis})}\label{thGK}
Let $k < 0$. Assume that $f: \R \to \R$ is increasing, $f \in C^1(\R \menos \{0\})$, satisfies \eqref{f1},
\begin{equation}\label{h1GK}
\exists \ c, \delta, \tau > 0, \quad s.t. \quad t f(t) \geq c |t|^{2 + \delta} \quad \forall \ t \in \R, \quad t f(t) \geq c F(t) \quad \forall \ |t|> \tau,
\end{equation}
and
\begin{equation}\label{h2GK}
\exists \ \lambda \in (0,1), \quad \exists \ \alpha > 0, \quad s.t. \quad \gd{\liminf_{t \rt \pm \infty} \dfrac{F (\lambda t)}{F(t)^{\alpha}}> 0}.
\end{equation}
Let $w = w(s)$ be a local solution of \eqref{edo1} in a neighborhood of $s =0$ and defined on the maximal interval on the right $[0,R)$. If the initial data satisfy \eqref{dados iniciais GP} or 
\begin{equation}\label{dados iniciais GK}
\frac{1}{2} (w''(0))^2 - \frac{k}{2} (w'(0))^2 - w'(0)w'''(0) - F(w(0)) \neq 0
\end{equation}
then $R<+\infty$ and
\[ \limsup_{s \to R} w(s) = + \infty, \quad  \liminf_{s \to R} w(s) = - \infty .\]
\end{named}

\begin{remarkk}
As pointed out in \cite{GazzolaKarageorgis}, Theorem C also holds for $k=0$ if $f$ satisfies $t f(t) \geq C t^2$ for all $t \in \R$, where $C$ is a positive constant.
\end{remarkk}

The contribution of this paper is to prove that Conjecture B holds true if and only if $k$ is less or equal to an explicit positive threshold.

We will assume that
\begin{equation}\label{hextra1}
f \in C^1(\R), \quad f'(t) > f'(0)\geq 0, \ \forall  \,  t \neq 0,
\end{equation}
and in some of our results we will suppose
\begin{equation}\label{hextra2}
f'(0)>0.
\end{equation}
Observe that if \eqref{f1} and \eqref{f12} are satisfied and if $f'(0)$ exists, then \eqref{hextra1} holds. 

We set
\[
k_f = 2 \inf_{t \in \mathbb{R}} \sqrt{f'(t)}.
\]
Hence, if \eqref{hextra1} is satisfied then $k_f= 2 \sqrt{f'(0)}$ and for the model problem \eqref{edo}, in the particular case with $f(t) = |t|^{p-1}t+ t$, we have $k_{f}= 2$ (we will comment later on this particular threshold). 

Associated to a solution $w$ of \eqref{edo1} we introduce the function 
\[
H(s)  =  w''(s)w'''(s)+ k w'(s)w''(s)+ f(w(s)) w'(s)
\]
and we state the main result in this paper.

\begin{theorem}\label{teoremaprincipal}
Assume that $f$ satisfies \eqref{f1}, \eqref{f2int}, \eqref{hextra1}, \eqref{hextra2} and let $0<k\leq k_f$.
Let $w$ be a nontrivial solution of (\ref{edo1}) defined on a neighborhood of $s=0$. Let $(R^-, R^+)$ be the maximal interval of existence of $w$. 
\begin{enumerate}[i)]
\item If the initial data satisfy $H(0) \geq 0$, then $R^+<+\infty$,
\[ \limsup_{s \to R^+} w(s) = + \infty \quad \text{and} \quad \liminf_{s \to R^+} w(s) = - \infty .\]
\item If the initial data satisfy $H(0)\leq 0$, then $R^- > - \infty$,
\[ \limsup_{s \to R^-} w(s) = + \infty \quad \text{and} \quad  \liminf_{s \to R^-} w(s) = - \infty .\]
\end{enumerate}
\end{theorem}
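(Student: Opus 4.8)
The plan is to reduce the two statements to one by symmetry, and then to build everything on the monotonicity of $H$, which is precisely where the threshold $k\le k_f$ enters. Equation \eqref{edo1} contains only even-order derivatives, so it is invariant under $s\mapsto-s$: if $w$ solves \eqref{edo1} on $(R^-,R^+)$, then $\tilde w(s):=w(-s)$ solves it on $(-R^+,-R^-)$, and a direct substitution gives $\tilde H(0)=-H(0)$. Hence ii) is merely i) applied to $\tilde w$, and I would only prove i): that $H(0)\ge 0$ forces $R^+<+\infty$ together with wide oscillation.

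The first genuine step is the monotonicity of $H$. Differentiating $H$ along a solution and using $w''''=-kw''-f(w)$ to cancel the terms containing $w''''$ and $f(w)w''$, one is left with
\[
H'(s)=\bigl(w'''(s)\bigr)^2+k\,w'(s)\,w'''(s)+f'(w(s))\bigl(w'(s)\bigr)^2,
\]
a binary quadratic form in $(w''',w')$ whose discriminant is $k^2-4f'(w(s))$. By \eqref{hextra1} and \eqref{hextra2} we have $f'(w(s))\ge f'(0)=(k_f/2)^2$, so the assumption $k\le k_f$ gives $k^2\le 4f'(w(s))$; the form is positive semidefinite and $H'\ge 0$. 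Thus $H$ is nondecreasing on $(R^-,R^+)$, and under $H(0)\ge0$ we obtain $H(s)\ge 0$ for all $s\in[0,R^+)$. This is the structural replacement — available only for $k\le k_f$ — for the tools that \cite{gazzolapavani2013} had at hand when $k\le 0$.

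Next I would read off the consequences for the auxiliary energy. Writing $F(t)=\int_0^t f$ and $\Phi:=\tfrac12(w'')^2+\tfrac k2(w')^2+F(w)$, one checks $\Phi'=H$, so $\Phi$ is nondecreasing on $[0,R^+)$; moreover $\Phi\ge0$ since $k>0$ and $F\ge0$ by \eqref{f1}. Comparing $\Phi$ at consecutive critical points of $w$ is what should yield the announced inequality \eqref{l3}, i.e. that the local extrema of $w$ form a sign-alternating sequence whose moduli strictly increase; because $F(t)\ge c|t|^{p+1}$ follows from the lower bound in \eqref{f2int}, the growth of $\Phi$ along these extrema forces $|w|$ to grow there as well. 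Consequently, once we know that $R^+$ is finite and that $w$ keeps oscillating up to $R^+$, the two conclusions $\limsup_{s\to R^+}w=+\infty$ and $\liminf_{s\to R^+}w=-\infty$ both follow.

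The step I expect to be the main obstacle is proving $R^+<+\infty$, i.e. that the widening oscillations accumulate at a finite parameter value. I would argue by contradiction, assuming $R^+=+\infty$. First one must exclude that $w$ is eventually monotone (otherwise there is no wide oscillation): using $f(t)t>0$ and the equation, an eventually one-signed bounded solution contradicts the strict growth of $\Phi$, while the unbounded alternative still has to be ruled out. Granting genuine oscillation with $\Phi\to+\infty$, the decisive estimate concerns the \emph{length} of a half-period between two consecutive extrema of amplitude $A$: combining the conserved energy $\mathcal{E}:=w'w'''-\tfrac12(w'')^2+\tfrac k2(w')^2+F(w)$ with the superlinearity $p>1$ in \eqref{f2int}, this length should decay like a negative power of $A$, so that summing over the increasing sequence of critical values produces a convergent series and hence a finite $R^+$, contradicting $R^+=+\infty$. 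This is delicate precisely because the fourth-order flow is not a planar Hamiltonian system, so the period estimate cannot be extracted from a single conserved energy as in the second-order case; controlling the extra degree of freedom (morally, $w'''$) simultaneously through $H$ and $\mathcal{E}$ is, I expect, the technical heart of the argument and the exact point at which the $k\le 0$ methods of Theorem A break down.
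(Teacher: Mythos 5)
Your reduction of ii) to i), your proof that $H'\ge 0$ when $0<k\le k_f$ (the discriminant computation), and the use of the convex function $G=\tfrac12(w'')^2+\tfrac k2(w')^2+F(w)$ with $G'=H$ all match the paper's setup, and your overall plan (amplitudes of the extrema grow, half-period lengths decay like a negative power of the amplitude, sum the gaps) is the paper's plan as well. But there is a genuine gap at the decisive point: you claim that \emph{monotonicity} of the critical values (the paper's \eqref{l3}) together with the gap estimate $m_{j+1}-m_j\le C M_j^{\frac{1-p}{4}}$ ``produces a convergent series.'' It does not. An increasing sequence $M_j$ can grow so slowly (e.g.\ logarithmically) that $\sum_j M_j^{(1-p)/4}$ diverges; even the linear-in-$s$ lower bound on $G$ coming from convexity only gives $F(M_j)\gtrsim m_j$, and the recursion $m_{j+1}\le m_j+Cm_j^{-\delta}$ with $\delta=\frac{p-1}{4(p+1)}$ is perfectly compatible with $m_j\to\infty$. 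What is actually needed is a geometric growth rate: the paper proves the doubling property \eqref{l1}, $F(w(m_{i+1}))>2F(w(m_{i-1}))$ for all large $i$, which yields $F(M_{2j+j_0})>2^jF(M_{j_0})$ and hence a convergent geometric series of gaps.

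Establishing this doubling is exactly where the $k\le 0$ methods break down and where the paper's new idea lives: for $k\le 0$ the function $\Phi=\tfrac12(w'')^2+F(w)$ (no $(w')^2$ term) is convex and doubling is immediate, while for $k>0$ it is not, and the paper must split into the cases $(w'''(m_i))^2\le(w''(m_i))^2$ and $(w'''(m_i))^2>(w''(m_i))^2$, using the inequality \eqref{des importante1 para blow up} (which itself rests on the fourth- and fifth-derivative analysis of Lemma \ref{lema quarta e quinta derivadas}) in the first case, and a separate argument showing $\Phi(z_j)>\Phi(m_j)$ for large $j$ in the second. Your proposal gestures at ``controlling $w'''$ through $H$ and $\mathcal{E}$'' as the technical heart but supplies no mechanism for it, and the step you treat as routine summation is precisely the step that fails without this mechanism. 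Secondarily, the gap-length estimates themselves (the paper's Steps 2--5) require the full sign/ordering structure $m_j<z_j<\tau_j<r_j<m_{j+1}$ of Lemma \ref{lema10}, including a test-function integration argument for $r_j-z_j\to 0$; energy conservation plus superlinearity \eqref{f2int} alone, as you invoke them, do not deliver these bounds.
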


For the sake of completeness we also discuss about the existence of periodic solutions of \eqref{edo1} in the case that $k$ is beyond $k_f$.

\begin{theorem}\label{periodics}
Let $f: \R \to \R$ be an odd function satisfying \eqref{f1}, \eqref{f2int} and \eqref{hextra1} {\rm{(}}we emphasize that \eqref{hextra2} is not assumed{\rm{)}}. If $k>k_f$, then there exists $a>0$ and a periodic solution of \eqref{edo1} such that $w(0) = w''(0) =0$, $w'(0)=a$ and $w'''(0) = - k a/2$.
\end{theorem}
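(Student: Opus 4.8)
The plan is to reduce the problem, by symmetry, to the construction of a single ``symmetry point'' of an odd, zero--energy trajectory, and then to produce such a point by a continuation/shooting argument whose feasibility rests on the fact that $k>k_f$ forces the origin to be an oscillatory center.

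First I would record the two symmetries of \eqref{edo1}: since only even--order derivatives appear, the equation is invariant under $s\mapsto -s$, and since $f$ is odd it is invariant under $w\mapsto -w$. Prescribing $w(0)=w''(0)=0$ and setting $\tilde w(s):=-w(-s)$, one checks that $\tilde w$ solves \eqref{edo1} with the same Cauchy data at $s=0$; local uniqueness (from \eqref{f1}) then gives $\tilde w\equiv w$, i.e. $w$ is odd. Next I would use the conserved energy $\mathcal E(s)=\tfrac12(w'')^2-w'w'''-\tfrac k2(w')^2-F(w)$, which is constant along solutions of \eqref{edo1}; on an odd solution $\mathcal E(0)=-w'(0)w'''(0)-\tfrac k2(w'(0))^2$, so imposing $\mathcal E\equiv 0$ is exactly the requirement $w'''(0)=-\tfrac k2 w'(0)=-\tfrac k2 a$. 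Thus the prescribed data single out precisely the odd trajectories on the zero--energy level. Finally I would observe that periodicity follows from one extra coincidence: if there is an $L>0$ with $w'(L)=w'''(L)=0$, then $\hat w(s):=w(2L-s)$ solves \eqref{edo1} and agrees with $w$ up to third order at $s=L$, whence $w(2L-s)=w(s)$; combined with oddness this yields $w(s+2L)=-w(s)$ and hence a $4L$--periodic solution. So the whole theorem reduces to finding $a>0$ for which the odd zero--energy trajectory has a point $L$ where $w'$ and $w'''$ vanish simultaneously.

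The reason $k>k_f$ is indispensable appears already at the linear level. The characteristic equation of $w''''+kw''+f'(0)w=0$ gives $\lambda^2=\tfrac12(-k\pm\sqrt{k^2-4f'(0)})$, and since $k>k_f=2\sqrt{f'(0)}$ both values of $\lambda^2$ are real and nonpositive (strictly negative when $f'(0)>0$); hence no eigenvalue has positive real part and the origin carries at least one pair of purely imaginary eigenvalues, so it is a (linear) center rather than the saddle--focus of the regime $k\le k_f$ that drives the blow up in Theorem \ref{teoremaprincipal}. This oscillation is what makes a return to the symmetry section possible. To exploit it I would set up a shooting argument: for $a>0$ let $w_a$ be the odd zero--energy solution with $w_a'(0)=a$, let $L(a)$ be its first positive critical point (where $w_a'=0$), and define $\Psi(a):=w_a'''(L(a))$; a zero of $\Psi$ produces the sought symmetry point and finishes the proof. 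One then has to check that $w_a$ exists up to $L(a)$ (the blow--up mechanism of Theorem \ref{teoremaprincipal} is absent because $k>k_f$), that $L(a)$ is well defined and depends continuously on $a$ (nondegeneracy $w_a''(L(a))\neq 0$, read off from $\mathcal E\equiv 0$), and finally that $\Psi$ changes sign as $a$ ranges over $(0,\infty)$, comparing the near--linear regime $a\to 0^+$ with the strongly nonlinear regime $a\to\infty$ controlled by the superlinear bounds \eqref{f2int}; an intermediate value argument then yields the desired $a$.

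The hard part will be exactly this last coincidence, i.e. producing the point $L$ where both $w'$ and $w'''$ vanish. In the shooting picture it amounts to the sign change of $\Psi$, which requires genuine control of the solution in both asymptotic regimes together with the continuity and nondegeneracy of $L(a)$. A variational alternative --- seeking critical points of the action $\int_0^{4L}[\tfrac12(w'')^2-\tfrac k2(w')^2+F(w)]\,ds$ on the subspace of odd functions satisfying $w(s+2L)=-w(s)$, with the half--period $2L$ as a free parameter --- runs into the strong indefiniteness created by the $-\tfrac k2(w')^2$ term and would demand a linking/saddle--point scheme. In either approach the threshold $k_f$ is not a technical artifact: it is precisely the condition guaranteeing the oscillatory (linking) structure about the origin, and below it Theorem \ref{teoremaprincipal} shows that no such periodic orbit can exist.
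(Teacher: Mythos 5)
Your reduction is exactly the paper's: oddness of the trajectory from $w(0)=w''(0)=0$, the identification of $w'''(0)=-ka/2$ with the zero-energy level, the fact that a point $L$ with $w'(L)=w'''(L)=0$ yields (by reflection plus oddness) a $4L$-periodic solution, and the shooting function $\Psi(a)=w_a'''(L(a))$ whose zero is to be found by the intermediate value theorem. Your observation that $w''(L(a))\neq 0$ follows from $\mathcal E\equiv 0$ (so that $L(a)$, hence $\Psi$, is continuous) is a nice touch that the paper leaves implicit. However, there is a genuine gap: you never prove that $\Psi$ changes sign, and this is not a finishing detail --- it is the entire quantitative content of the paper's proof. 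The paper establishes it through two separate scaling limits. For $a\to\infty$ one rescales $t=a^{\sigma}s$, $v=a^{r}w$ with $\sigma=\frac{p-1}{p+3}$, $r=-\frac{4}{p+3}$, so that \eqref{pvisegundo} converges (as a regular perturbation) to the purely superlinear problem $V''''+g(V)=0$, $(V,V',V'',V''')(0)=(0,1,0,0)$, where $g$ inherits \eqref{f2int}; known results for that limit problem give $V'''(T)<0$ at the first critical point, whence $\Psi(a)<0$ for large $a$. For $a\to 0^+$ one rescales with $\sigma=0$, $r=-1$ and obtains the linear limit problem $V''''+kV''+f'(0)V=0$ with data $(0,1,0,-k/2)$, which is solved explicitly in trigonometric functions to get $V'''(T)=\tfrac12\sqrt{k^2-4f'(0)}\,\cos(\lambda_2T)>0$, whence $\Psi(a)>0$ for small $a$. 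Your proposal merely names these two regimes and asserts the conclusion; without the rescalings and the analysis of the two limit problems, the sign change is unproved.

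Two further points. First, since \eqref{hextra2} is \emph{not} assumed, the small-$a$ regime splits into two cases: when $f'(0)=0$ the limit problem degenerates to $V''''+kV''=0$, whose solution $V(t)=\tfrac{1}{2\sqrt k}\sin(\sqrt k\,t)+\tfrac t2$ must be treated separately (the paper does this in a dedicated case); your linear-center heuristic, which presumes a pair of nonzero imaginary eigenvalues, does not cover it. Second, your justification that $w_a$ exists up to $L(a)$ --- ``the blow-up mechanism of Theorem \ref{teoremaprincipal} is absent because $k>k_f$'' --- is incorrect as stated: Theorem \ref{teoremaprincipal} says all nontrivial solutions blow up when $0<k\le k_f$, but it does not exclude blow-up of particular solutions when $k>k_f$. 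What is actually needed (and what the paper invokes, via \cite[Theorem 4]{berchiofgk} and \cite[Lemma 24]{berchiofgk}) is that, whether or not the solution is global, it oscillates, so a first critical point $m(a)<+\infty$ exists before any possible blow-up.
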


\begin{remark}
We have shown, by means of Theorems \ref{teoremaprincipal} and \ref{periodics}, that $k_f$ is the precise threshold for the finite space blow up of solutions of the equation \eqref{edo1}. Let us summarize some consequences of our theorems:

\begin{enumerate} [\textbullet]
\item The two results in Theorem \ref{teoremaprincipal} hold true if we replace the conditions on the sign of $H(0)$ by the respective conditions on $H(s_0)$ at any $s_0 \in (R^-, R^+)$. Indeed, take into account that $w_{s_0}(s) = w(s+ s_0)$ solves \eqref{edo1} provided $w$ does. 
\item Let $0 < k \leq k_f$. Then, by Theorem \ref{teoremaprincipal}, any nontrivial solution of \eqref{edo1} blows up in finite space either to the right or to the left of $s=0$. In this sense Theorem \ref{teoremaprincipal} is stronger than Theorems E and F below on the nonexistence of nontrivial solution of \eqref{edo1} that are globally defined on $\R$.
\item Let $0 < k \leq k_f$ and let $w$ be a nontrivial solution of \eqref{edo1} defined on a neighborhood of $s=0$. According to Theorem \ref{teoremaprincipal}, if $w$ satisfies $H(s_0)=0$ for some $s_0$, then it blows up at finite space to the right and to left of $s=0$. 
\item The solution $w$ of Theorem \ref{periodics} satisfies $H(0)=0$.

\item For any $k > k_f$, the periodic solution $w$ of Theorem \ref{periodics} produces a counter-example for the Conjecture B. Indeed, let $\bar{H}$ be as in \cite{gazzolapavani2013}, namely
\[
\bar{H}(s) = w'(s)w''(s) - w(s)w'''(s) - k w(s)w'(s).
\]
Then the periodic solution of Theorem \ref{periodics} is such that \[\bar{H}(0)=0, \quad \bar{H}'(0)=-k (w'(0))^2<0. \]
Then $\bar{H}$ is negative in a small interval to the right of $0$. Therefore, for some $\varepsilon_0>0$ sufficiently small
\[
w_*(s) = w(-s + \varepsilon_{0})
\]
is a periodic solution of \eqref{edo1} such that $\bar{H}(0)>0$.  
\item Observe that $k_f = 0$ if $f$ verifies \eqref{f2int} with $p> q > 1$ and that $k_f =2$ if $f(t) = |t|^{p-2}t+ t$ with $p>1$.
\item A physical interpretation of Theorem \ref{teoremaprincipal} is that there exists a critical speed, namely $c_* = \sqrt{2{\sqrt{f'(0)}}}$, for the existence of a traveling wave solutions of \eqref{eqtempo}.  No beam oscillation can have a traveling wave profile at low speed propagation, namely less or equal to $c_*$. This result answers to some open questions related to those in \cite[p. 3999]{lazermckenna2011}.
\end{enumerate}\end{remark}

\begin{remark}
The classical stationary 1-D Swift-Hohenberg equation is written as
\begin{equation}\label{SHequation}
\left( 1 + \partial_s^2 \right)^2 U + U^3 -\parameter U = 0 \quad s \in \R, \ \ \text{where} \ \  \parameter \in \R \ \ \text{is a parameter},
\end{equation}
cf. \cite[eq. (3.27)]{crosshohenberg1993}, \cite[eq. (1.2)]{bergpeletiertroy2001}, \cite[eq. (9.0.1)]{peletiertroy}, \cite[p. 95]{peletiervivi2004}, \cite[eq. (3)]{Bonheure2004}.

\begin{enumerate}[\textbullet]
\item If $\parameter<1$, then by means of the change of variables
\[
w(s) = \frac{1}{\sqrt{1- \parameter}} U \left(\frac{1}{\sqrt[4]{1-\parameter}} s\right)
\]
we infer that $w$ solves
\begin{equation}\label{w3+w}
w''''+ \frac{2}{\sqrt{1-\parameter}}w'' + w^3 + w = 0,
\end{equation}
which is a particular case of \eqref{edo1} with $k = \frac{2}{\sqrt{1-\parameter}}$ and $f(t) = t^3+ t$. Therefore, our results apply to the equation \eqref{SHequation} with $\parameter< 1$ and, in particular, 

\begin{em}
\begin{center}
``Theorem \ref{teoremaprincipal} asserts that  in the case of $\parameter \leq0$ 
\\any nontrivial solution of \eqref{SHequation} blows up in finite space''. 
\end{center}
\end{em}

\item On the other hand, Peletier and Troy (Theorem D below) proved the existence of a nontrivial periodic solution to \eqref{SHequation} in the case of $\parameter \in (0,1)$. 

\item In the case of $\parameter=1$, no rescaling is needed as Theorem \ref{periodics} applies directly to \eqref{SHequation} and guarantees the existence of a nontrivial periodic solution.

\item If $\parameter > 1$, then by means of the change of variables
\[
\mathcal{W}(s) = \frac{1}{\sqrt{\parameter- 1}} U \left(\frac{1}{\sqrt[4]{\parameter-1}} s\right)
\]
we infer that $\mathcal{W}$ solves 
\begin{equation}\label{doublewell}
\mathcal{W}''''+ \frac{2}{\sqrt{\parameter-1}}\mathcal{W}'' + \mathcal{W}^3 - \mathcal{W} = 0.
\end{equation}
This equation, when compared with \eqref{w3+w}, presents qualitative differences as it has three equilibrium points. The nonlinear term $t^3-t$ does not satisfy our hypotheses and hence our results do not apply to this case. We refer to \cite{smetsvanderberg, Bonheure2004, santrawei2009} for results regarding existence of homoclinic and heteroclinic solutions of the equation \eqref{doublewell} and to \cite{dengli2012} for the existence of generalized homoclinic solutions of \eqref{doublewell} for every $\parameter>1$. 
\end{enumerate}

Adding all these comments together, we infer that $\parameter = 0$ is the threshold for the existence of nontrivial solution of \eqref{SHequation} that are globally defined on $\R$. More precisely: 
\begin{enumerate}[i)]
\item If $\parameter \leq 0$ then any nontrivial solution of \eqref{SHequation} blows up in finite space.
\item If $\parameter> 0$ then there exists a (bounded) solution of \eqref{SHequation} that is globally defined on $\R$.
\end{enumerate}

\end{remark}
 
To finish this introduction let us mention some earlier works that have indicated the role played  by $k_f$ in some related results. In the case of $f(t) = t^3+ t$, the number $k_f=2$ has appeared in \cite{peletiertroy}; see also \cite{colleteckmann}.
\begin{named}{Theorem D (\cite[Theorem 9.2.1]{peletiertroy})}
Consider the equation \eqref{edo1} with $f(t) = t^3+t$. If $k > k_f$, then equation \eqref{edo1} has a periodic solution with $w'(0)>0$.
\end{named}

\begin{named}{Theorem E (\cite[Theorem 9.1.1]{peletiertroy})}
Consider the equation \eqref{edo1} with $f(t) = t^3+t$. If $0 <k \leq k_f$, then there exist no nontrivial periodic solutions or homoclinic solutions of equation \eqref{edo1}.
\end{named}

Finally we recall the following result from Karageorgis and Stalker \cite[Corollary 2]{karageorgisstalker}. In particular, the theorem below extends the part of Theorem E on the nonexistence of nontrivial homoclinic solution of \eqref{edo1}.

\begin{named}{Theorem F (\cite[Corollary 2]{karageorgisstalker})}\label{kf homoclinic}
Assume that the function $f: \R \to \R$ satisfies \eqref{f1}, \eqref{hextra1} and \eqref{hextra2}. If $0 < k < k_{f}$, then \eqref{edo1} has no nonzero homoclinic solutions. 
\end{named}

\section{On the blow up of the solutions of (\ref{edo1}) and some preliminary estimates}\label{sec:naoexiste}

In this section we present a careful description of the oscillation of the solutions of the Swift-Hohenberg equation. In particular we show that, under a suitable assumptions on the parameter $k$, any nontrivial solution of \eqref{edo1} blow up. This is the first step towards the finite space blow up. We mention that  our procedure has some intersection with that adopted in \cite{gazzolapavani2013}. Neverthless, as already mentioned many of the arguments used in \cite{gazzolapavani2013} to deal with the eFK equation cannot be applied to the Swift-Hohenberg equation, so we have to overcome different difficulties. 

To start we introduce some special functions, namely $\mathcal{E}$ and $G$ below, that for example appear in \cite[Chapter 9]{peletiertroy}.

Associated to a solution $w$ of the equation \eqref{edo1} we consider the energy function
\[
\mathcal{E}(s) = w'(s)w'''(s)-\frac{1}{2} (w''(s))^2 + \frac{k}{2} (w'(s))^2 + F (w(s)) .
\]
Since $w$ solves \eqref{edo1}, we get that $\mathcal{E}'\equiv 0$. We denote by $E$ the constant value of $\mathcal{E}(s)$.

We also set
\begin{equation}\label{fcg}
G(s) = \frac{1}{2} (w''(s))^2+ \frac {k}{2} (w'(s))^2 + F(w(s)),
\end{equation}
and we denote by $H$ its derivative, namely
\[
H(s)  = G'(s) =  w''(s)w'''(s)+ k w'(s)w''(s)+ f(w(s)) w'(s).
\]
Then observe, since $w$ solves \eqref{edo1}, that
\[
H'(s) = G''(s) = (w'''(s))^2 + k w'(s)w'''(s)+ f '(w(s)) (w'(s))^2.
\]

We stress that, comparing with \cite{gazzolapavani2013}, we use the same energy function $\mathcal{E}$ but the auxiliary function $G$ and $H$ are different.

We start with a simple remark. Given a function $w$ we will denote by $\overline{w}$ the function
\[
\overline{w}(s) = w(-s).
\]
Observe that if $w$ is a solution of \eqref{edo1} in a neighborhood of $0$ then, since only even derivatives appear in the equation \eqref{edo1}, $\overline{w}$ also solves \eqref{edo1} in a possibly different neighborhood of $0$.

\begin{lemma}
Assume that $f \in C^1(\R)$ is a nondecreasing function. Then $H$ is nondecreasing and $G$ is convex, provided $|k|\leq k_f$.
\end{lemma}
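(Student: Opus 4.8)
The plan is to prove directly that $H'(s) \geq 0$ for every $s$ in the interval of existence of $w$. Since $H' = G''$, this single inequality simultaneously yields that $H$ is nondecreasing and that $G$ is convex, so both assertions of the lemma reduce to one sign condition. Recalling the expression
\[
H'(s) = (w'''(s))^2 + k w'(s)w'''(s) + f'(w(s)) (w'(s))^2,
\]
I would regard the right-hand side as a quadratic form in the two quantities $w'''(s)$ and $w'(s)$, with the point $w(s)$ entering only through the coefficient $f'(w(s))$.

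The key step is to complete the square in $w'''(s)$, writing
\[
H'(s) = \left(w'''(s) + \frac{k}{2} w'(s)\right)^2 + \left(f'(w(s)) - \frac{k^2}{4}\right)(w'(s))^2.
\]
The first summand is manifestly nonnegative, so it remains to control the sign of the coefficient $f'(w(s)) - k^2/4$. Since $f$ is nondecreasing we have $f' \geq 0$ on $\R$, so the square roots below are real, and by definition $k_f = 2 \inf_{t \in \R} \sqrt{f'(t)}$, whence $k_f^2 = 4 \inf_{t \in \R} f'(t)$ (the monotonicity of $t \mapsto \sqrt{t}$ on $[0,\infty)$ lets us interchange the infimum with the squaring). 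The hypothesis $|k| \leq k_f$ then gives $k^2 \leq k_f^2 = 4 \inf_{t \in \R} f'(t) \leq 4 f'(w(s))$ for every $s$, so that $f'(w(s)) - k^2/4 \geq 0$ and hence $H'(s) \geq 0$.

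I expect no serious obstacle here; the argument is essentially the positive semidefiniteness of the matrix $\bigl(\begin{smallmatrix} 1 & k/2 \\ k/2 & f'(w(s))\end{smallmatrix}\bigr)$, which is equivalent to the discriminant condition $k^2 \leq 4 f'(w(s))$. The only point deserving care is the identity $k_f^2 = 4\inf_{t \in \R} f'(t)$ together with the fact that the infimum is taken over all of $\R$, not merely over the range of $w$; this is what guarantees the coefficient is nonnegative uniformly in $s$, independently of which values $w$ actually attains. Note also that this step uses $|k| \leq k_f$ in its sharp form, so the threshold $k_f$ is precisely the borderline beyond which the quadratic form can fail to be nonnegative, which foreshadows why $k_f$ is the exact cutoff in Theorems \ref{teoremaprincipal} and \ref{periodics}.
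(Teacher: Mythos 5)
Your proof is correct and is essentially the same argument as the paper's: the paper also treats $H'$ as a quadratic in $w'''$ and verifies nonnegativity via the discriminant condition $D(s)=(k^2-4f'(w(s)))(w'(s))^2\leq 0$, which is exactly what your completion of the square establishes explicitly. Your version merely spells out the discriminant criterion (and the identity $k_f^2=4\inf_{t\in\R}f'(t)$) rather than citing it, so there is no substantive difference.
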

\begin{proof}
Indeed, observe that $H'$ is a polynomial of degree $2$ in $w'''$. Therefore $H'$ is nonnegative provided
\begin{equation}\label{bhaskara}
D(s) = k^2 (w'(s))^2 - 4 f'(w(s)) (w'(s))^2 = (k^2-4  f '(w(s)))(w'(s))^2\leq 0,
\end{equation}
which is verified for every $s$ if $f'\geq0$ everywhere and $|k|\leq k_f$.
\end{proof}

Observe that if $|k|<k_f$, then
\[ w'(s) \neq 0 \Rightarrow D(s) < 0.\]
On the other hand, assume that \eqref{hextra2} is satisfied. If $|k|=k_f$, then 
\[w(s)w'(s) \neq 0 \Rightarrow D(s) < 0.\]

The auxiliary function $G$ was used in \cite{peletiertroy} to show that neither homoclinic nor periodic solutions exist to \eqref{edo} with $f(t) = t + t^3$ and $k \leq2$; cf. \cite[Theorem 9.1.1]{peletiertroy}. Combining the properties of $G$ and the energy function $\mathcal{E}$ we prove a preliminary result, namely Proposition \ref{teoremaexplode}, needed to the proof of Theorem \ref{teoremaprincipal}. Before that we recall the following elementary result.

\begin{remark}\label{construcaoseq}
Let $w$ be a differentiable function on $(0,\infty)$ such that
\[
a^- = \liminf_{s\to +\infty} w(s) < \limsup_{s\to+\infty} w(s) = a^+.
\] 
Then there are sequences $(x_j)$ e $(y_j)$ such that 
\[
x_j \to + \infty, \ y_j\to +\infty, \ w(x_j) \to a^-, \ w(y_j)\to a^+
\]
and
\[
w'(x_j ) = w'(y_j) = 0, \ \ w(x_j)<w(y_j), \quad \forall \ j \in \N.
\]
Here $a^-, a^+ \in [-\infty, + \infty]$.
\end{remark}

\begin{proposition}\label{teoremaexplode}
Let $0 \leq k \leq k_f$ and assume that $f$ satisfies \eqref{f1}, \eqref{f2int} and \eqref{hextra1}. Let $w$ be a nontrivial solution of (\ref{edo1}) defined in a neighborhood of $s=0$. Let $(R^-, R^+)$ be the maximal interval of existence of $w$ and set
\[a^+ = \limsup_{s \to R^+} w(s), \quad a^- = \liminf_{s \to R^+} w(s), \]
\[a_+ = \limsup_{s \to R^-} w(s), \quad a_- = \liminf_{s \to R^-} w(s). \]
Then $a^- \leq 0 \leq a^+$, $a_- \leq 0 \leq a_+$ and $w$ is unbounded. Moreover,
\begin{enumerate}[i)]
\item If $H(s_0) \geq 0$ at some $s_0\in (R^-, R^+)$, then 
\begin{equation}\label{explosao a direita}
a^+ = - a^- = + \infty.
\end{equation}
\item If $H(r_0) \leq 0$ at some $r_0 \in (R^-, R^+)$, then
\begin{equation}\label{explosao a equerda}
a_+ = - a_- = + \infty.
\end{equation}
\end{enumerate}
\end{proposition}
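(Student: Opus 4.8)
The plan is to exploit three facts established above: by the preceding Lemma, $G$ is convex and $H=G'$ is nondecreasing on $(R^-,R^+)$; the energy $\mathcal E$ is constant, equal to $E$; and, by \eqref{f2int}, $F$ is nonnegative and coercive, $F(t)\to+\infty$ as $|t|\to\infty$. The computational heart is the identity at critical points: if $w'(c)=0$, then evaluating $\mathcal E$ at $c$ gives $\tfrac12(w''(c))^2=F(w(c))-E$, whence
\[
G(c)=2F(w(c))-E .
\]
Thus the growth of $G$ along critical points is governed by $F(w)$. I would first reduce to statement (i): statement (ii) follows by applying (i) to $\overline w(s)=w(-s)$, for which $H_{\overline w}(s)=-H(-s)$ and the roles of $R^+$ and $R^-$ are interchanged, while the unconditional sign claims on $a^\pm$, $a_\pm$ and the unboundedness of $w$ follow from (i)--(ii) together with the dichotomy that, $H$ being nondecreasing, either $H\ge 0$ somewhere or $H<0$ everywhere.

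Next I would prove the sign constraints $a^-\le 0\le a^+$ (and likewise at $R^-$) by contradiction. Suppose, say, $a^+<0$, so that $w(s)\le-\varepsilon$ on some $[s_1,R^+)$; then $f(w)\le f(-\varepsilon)=:-\delta<0$. Introducing $Q=w''+kw$, one has $Q''=w''''+kw''=-f(w)\ge\delta>0$, so $Q$ is convex and $Q'=w'''+kw'$ is increasing. I would then split into cases: if $w$ is bounded on $[s_1,R^+)$ then $Q$, and successively $w''$, $w'$, $w'''$, are bounded, so the solution extends past a finite $R^+$ (contradiction), while for $R^+=+\infty$ the quadratic lower bound on $Q$ forces $w''\to+\infty$, hence $w\to+\infty$ (contradiction); if $w\to-\infty$ then $-kw\to+\infty$ and $Q$ bounded below again give $w''\to+\infty$, so $w\to+\infty$, a contradiction; and if instead $\limsup_{s\to R^+}w$ is finite while $w$ oscillates, the accumulating local maxima $c$ satisfy $G(c)=2F(w(c))-E$ bounded, contradicting $G\to+\infty$ (established in the next step). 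The case $a^->0$ is symmetric, using $Q''=-f(w)<0$.

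To prove (i), assume $H(s_0)\ge0$; since $H$ is nondecreasing, $G$ is nondecreasing on $[s_0,R^+)$ and increases to some $L\in(0,+\infty]$. I would show $L=+\infty$. If $L<\infty$ and $R^+<\infty$, then $G$ bounded yields $w,w',w''$ bounded and, via \eqref{edo1}, $w'''$ bounded, so the solution extends---contradiction; if $L<\infty$ and $R^+=+\infty$, then $H\ge0$ nondecreasing with $\int_{s_0}^{+\infty}H\,ds=L-G(s_0)<\infty$ forces $H\equiv0$, hence $H'\equiv0$, and writing
\[
H'=\Big(w'''+\tfrac k2 w'\Big)^2+\Big(f'(w)-\tfrac{k^2}{4}\Big)(w')^2,
\]
the bound $k\le k_f$ together with \eqref{hextra1}--\eqref{hextra2} makes the second term strictly positive off the zero set, forcing $w'\equiv0$, i.e. $w\equiv0$, contrary to nontriviality. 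Hence $G\to+\infty$, so $w$ is unbounded. Finally, since $a^-\le0\le a^+$, the function $w$ oscillates, and Remark \ref{construcaoseq} supplies critical sequences $x_j,y_j\to R^+$ with $w(x_j)\to a^-$, $w(y_j)\to a^+$; the identity $G(c)=2F(w(c))-E$ and $G\to+\infty$ give $F(w(x_j)),F(w(y_j))\to+\infty$, so $|a^\pm|=+\infty$, and the signs pin down $a^+=+\infty$ and $a^-=-\infty$.

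The main obstacle I anticipate is guaranteeing genuine wide oscillation rather than a degenerate mode in which $G\to+\infty$ without $w$ generating critical sequences that tend to both $+\infty$ and $-\infty$---for instance $w\to 0$ with the derivatives blowing up, or $w$ eventually monotone near $R^+$. This is delicate precisely when $R^+<\infty$, where the clean double-integration estimates on $Q$ are unavailable. The identity $G(c)=2F(w(c))-E$ is the key device here: any critical points accumulating at $R^+$ must have $|w|\to\infty$, which, combined with the $Q$-monotonicity argument ruling out eventual monotonicity and the extension argument ruling out boundedness, closes the remaining gaps.
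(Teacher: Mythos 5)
Your architecture is essentially the paper's: convexity of $G$, the critical-point identity $G(c)=2F(w(c))-E$ coming from the conserved energy, reflection $\overline w(s)=w(-s)$ to get (ii) from (i), and the rigidity step ``$H\equiv0$ forces $w\equiv0$'' (your completed square for $H'$ is exactly the paper's discriminant condition \eqref{bhaskara}). The genuine difference is that you replace the paper's repeated appeals to \cite[Lemmas 23 and 24]{berchiofgk} by self-contained arguments: the convexity of $Q=w''+kw$ and the continuation criterion at a finite maximal time. Where those arguments apply (solution bounded away from zero, or bounded near a finite $R^+$) they are correct and give a more elementary treatment. But they do not cover everything the cited lemmas cover, and this is where your proof has a real gap: in the last step of (i) you assert ``since $a^-\le 0\le a^+$, the function $w$ oscillates,'' which is a non sequitur. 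The constraints $a^-\le0\le a^+$ are compatible with the degenerate possibility $a^-=a^+=0$, i.e.\ $w(s)\to 0$ as $s\to R^+$ while $(w')^2+(w'')^2\to\infty$ (so that $G\to+\infty$ is not contradicted); in that case Remark \ref{construcaoseq}, which requires $a^-<a^+$, produces no critical sequences, and your concluding contradiction evaporates. This degenerate case is exactly what the paper's ``Case 2'' is for, and the paper disposes of it by quoting \cite[Lemma 24]{berchiofgk} (a nontrivial global solution has sign-changing critical points accumulating at $+\infty$); for $R^+<\infty$ it quotes \cite[Lemma 23]{berchiofgk} wholesale.

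Your closing paragraph shows you anticipated this, and the patch you sketch can be made to work, but not with the $Q$-argument as written: that argument needs $w$ bounded away from $0$ so that $Q''=-f(w)\ge\delta>0$ uniformly, whereas in the case $w\to0$ monotonically with $k>0$ one has $Q''\to0$ and the quadratic growth of $Q$ is lost; one then needs a finer limiting analysis (if critical points accumulate, the identity $G(c)=2F(w(c))-E$ kills them; if $w$ is eventually monotone, a dichotomy on the sign of $Q'$ gives $w''\to L$, then $L=0$, and then $G\to\infty$ forces $k(w')^2\to\infty$, contradicting $w\to0$). None of this is in your main text, so as submitted the case $w\to0$ is open. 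Two smaller points. First, inside your unconditional proof of $a^-\le0\le a^+$ you invoke ``$G\to+\infty$ (established in the next step),'' but that step proves it only under $H(s_0)\ge0$; for the oscillating sub-case with $R^+<\infty$ you need the standalone fact that $G\to+\infty$ whenever $R^+<\infty$ (convexity of $G$ gives a limit at $R^+$; if it were finite, coercivity of $F$ from \eqref{f2int}, the equation \eqref{edo1} and the continuation criterion would extend the solution past $R^+$). Second, \eqref{hextra2} is not a hypothesis of this proposition; the rigidity step must conclude only $ww'\equiv0$ (hence $w$ constant, hence $w\equiv0$) from \eqref{hextra1} and $k\le k_f$, exactly as the paper does, since $f'(t)>f'(0)\ge k^2/4$ for $t\ne0$ but equality may hold at $t=0$.
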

\begin{proof}
\noindent \textsc{Step 1: $a^- \leq 0 \leq a^+$ and $a_- \leq 0 \leq a_+$.}

\noindent If $R^+  < + \infty$, then by \cite[Lemma 23]{berchiofgk} we have $a^+ = - a^- = + \infty$. If $R^->-\infty$, then using $\overline{w}$ we get again from  \cite[Lemma 23]{berchiofgk} that $a_+ = - a_- = +\infty$. On the other hand, if $R^+ = +\infty$ then by \cite[Lemma 24]{berchiofgk} we have $a^- \leq 0 \leq a^+$. Using again \cite[Lemma 24]{berchiofgk} applied to $\overline{w}$ we have $a_-\leq 0 \leq a_+$ in the case of $R^- = -\infty$.

\medbreak
\noindent \textsc{Step 2: $w$ is unbounded.}

\noindent In the case that $R^+ < + \infty$ or $R^- > - \infty$, we know by \cite[Lemma 23]{berchiofgk} that $w$ is unbounded. So assume that $(R^-, R^+) = \R$. By contradiction suppose that $w$ is bounded, namely,
\begin{equation}\label{supondo finito}
- \infty < a^-, \ a_- \qquad \text{and } \qquad a^+, \ a_+ < + \infty.
\end{equation}

\noindent \textsc{Claim: If \eqref{supondo finito} holds true, then $G: \R \to \R$ is bounded.}

\noindent First observe that $G: \R \to \R$ being bounded leads to a contradiction. Indeed, because in this case $G$ is bounded and convex, hence constant. Therefore
$H(s) = 0$ for all $s \in \R$. Since $|k| \leq k_f$ and \eqref{hextra1} is satisfied, it follows from \eqref{bhaskara} that the product $w(s) w'(s) = 0$ for all $s \in \R$ and hence $w$ is constant. Then, from \eqref{edo1} and \eqref{f1} we get that $w \equiv 0$ on $\R$, which is a contradiction.

\medbreak
Now we prove the above claim. 
 
\noindent \textsc{Case 1: If $ a^-<a^+$, then $G$ is bounded on $[0,+ \infty)$.}

\noindent Since $a^- < a^+$, then from Remark \ref{construcaoseq} there exist two sequences $(x_n)$ e $(y_n)$, such that $x_n \to \infty$, $y_n \to \infty$ and 
\[
w(x_n) \to a^-, \ w(y_n) \to a^+, \ n\to\infty,\]
\[
w'(x_n) = w'(y_n) = 0, \ \forall \ n\in\mathbb{N}.
\]

Evaluating $G$ and $\mathcal{E}$ at a point $\xi_0$ where $w'(\xi_0) = 0$, we get 
\[
G(\xi_0) = \frac{(w''(\xi_0))^2}2 + F(w(\xi_0)), \ \ \ \mathcal{E}(\xi_0) = -\frac{(w''(\xi_0))^2}2 + F(w(\xi_0)) =E.
\]
Therefore  
\begin{equation}\label{G no ponto critico}
G(\xi_0) = 2 F(w(\xi_0)) -E, \quad \forall \ \xi_0 \ \ s.t. \ \ w'(\xi_0)=0.
\end{equation}
Then we infer that
\begin{equation}\label{finito legal}
G(x_n) \to 2 F(a^-) -E \qquad \text{and } \qquad G(y_n) \to 2 F(a^+) -E.
\end{equation}
From \eqref{supondo finito}, both limits above are finite.

If $F(a^-) \neq  F(a^+)$, then the two sequences $(x_j)$ and $(y_j)$ are such that  $x_j \to + \infty$, $y_j \to + \infty$ and 
$(G(x_j))$, $(G(y_j))$ converge to different limits, which may not occur since $G$ is convex. So we must have $F(a^-) =  F(a^+)$. Hence $G$ is bounded on $[0, + \infty)$. 
\medbreak
\noindent \textsc{Case 2: If $ a^-=a^+$, then $G$ is bounded on $[0,+ \infty)$.}

\noindent Since $a^- = a^+$, then from Step 1 it follow that $a^- = a^+ =0$. Observe that \cite[Lema 24]{berchiofgk} guarantees the existence of two sequences $(x_n)$ and $(y_n)$ such that $x_n \to + \infty$, $y_n \to + \infty$ and
\[
w'(x_n) = w'(y_n) = 0, \ \ w(x_n) < 0, \ \ w(y_n)> 0, \ \forall \ n\in\mathbb{N}.
\]
Then from \eqref{G no ponto critico} we get that
\[
G(x_n) \to -E.
\]
Then, since $G$ is convex, we conclude again that $G$ is bounded on $[0, +\infty)$.

\medbreak

From Cases 1 and 2, we conclude that $G$ is bounded on $[0, +\infty)$. Now applying the same argument to $\overline{w}$ we conclude that $G$ is also bounded on $(- \infty, 0]$. Therefore, the above claim is proved. Hence, at this point we have proved that $w$ is not bounded, that is
\[
a^++a_+-a^--a_- = +\infty.
\]

\medbreak
\noindent \textsc{Step 3: Proof of $i)$.}

\noindent If $R^+ < + \infty$, then \eqref{explosao a direita} follows by \cite[Lemma 23]{berchiofgk}. Now, suppose that $R^+ = + \infty$ and $H(s_0) \geq 0$ for some $s_ 0\in (R^-, R^+)$. Then there exists $s_1 \in [s_0, + \infty)$ such that $H(s_1)>0$. Otherwise, as we argued above, we would get $H(s)=0$ for every $s\geq s_0$, which implies $w(s) w'(s) = 0$ for every $s \geq s_0$, then $w \equiv 0$ on $[s_0, + \infty)$ and hence on $(R^-, R^+)$, which is a contradiction. Then, since $G$ is convex we get 
\[
G(s) \geq G(s_1) + (s-s_1) H(s_1), \quad s \geq s_1.
\]
Then $G(s)\to +\infty$ as $s\to +\infty$. Then, as we argued at Cases 1 and 2 above, $w$ cannot be bounded on $[0, +\infty)$ neither from below nor from above (otherwise $G$ should be bounded on $[0, + \infty)$). 

\medbreak
\noindent \textsc{Step 4: Proof of $ii)$.}

\noindent If $R^- > - \infty$, then \eqref{explosao a direita} follows by \cite[Lemma 23]{berchiofgk}.In the case that $H(r_0)\leq0$ for some $r_0 \in (R^-, R^+ )$ then consider $G$ the function associated to $\overline{w}$. In this case the function $H$ associated to $\overline{w}$ will be nonnegative at some point and hence we apply the argument form the last paragraph.
\end{proof}

\begin{remark}
Let $0 \leq k \leq k_f$ and assume that $f$ satisfies \eqref{f1}, \eqref{f2int} and \eqref{hextra1}. Let $w$ be a nontrivial solution of \eqref{edo1} and assume that $H(s_0)\geq0$ at some $s_0$.  Then $H(s)> 0$ for every $s>s_0$. In particular, if $H(0)\geq 0$ then there exists $m>0$ such that $m$ is a local maximum of $w$, $w(m)>0$ and $H(m)>0$.
\end{remark}

Next we give detailed information about the oscillations of a solution $w$ of \eqref{edo1}. We stress that these information are crucial in proof of the finite space blow up of solutions of \eqref{edo1}. We emphasize that we are dealing with the case of $k >0$ and that we are forced to argue differently from \cite[Lemmas 10 and 11]{gazzolapavani2013}.

\begin{lemma} \label{lema10}
Let $0 \leq k \leq k_f$ and assume that $f$ satisfies \eqref{f1}, \eqref{f2int} and \eqref{hextra1}. Let $w = w(s)$ be a local solution of \eqref{edo1} on a neighborhood of $s =0$ and defined on the maximal interval on the right $[0,R)$. Let $m\geq 0$ be a local maximum of $w$ with $w(m)>0$ and $H(m)>0$. Let $m'>m$ be the next critical point of $w$. Then:
\begin{enumerate}[i)]
\item There exists $r \in (m, m')$ such that $w''<0$ on $[m, r)$ and $w''> 0$ on $(r,m']$. Furthermore, $w(r)<0$ and $w'<0$ on $(m,m')$. In particular $w(m')< 0$ and $m'$ is local minimum of $w$.
\item There exists $z \in (m, m')$ such that $w(z)=0$. 
\item There exists $\tau \in (m,m')$ such that $w'''< 0$ on $[m,\tau)$ and $w'''>0$ $(\tau,m']$. In addition, $z<\tau<r$.
\end{enumerate}
\end{lemma}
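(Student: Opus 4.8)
The plan is to pin down, in turn, the signs of $w'$, $w''$, and $w'''$ on $(m,m')$, using $H(m)>0$ as the engine together with the two facts already available: $H$ is nondecreasing and $G$ is convex (the Lemma above), so that $H\ge H(m)>0$ on $[m,m']$ and $G$ is strictly increasing there. First I record the data at $m$. Since $m$ is a local maximum, $w'(m)=0$ and $w''(m)\le 0$; because $w'(m)=0$ we have $H(m)=w''(m)w'''(m)$, so $H(m)>0$ forces $w''(m)<0$ and $w'''(m)<0$. As $w''(m)<0$, $w'$ is strictly decreasing at $m$ and becomes negative, and since $m'$ is the next zero of $w'$ we obtain $w'<0$ on $(m,m')$; hence $w$ is strictly decreasing there.

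Next I would establish the sign pattern of $w'''$ (part iii). The key local computation is that at any interior zero $\sigma$ of $w'''$, since $w'(\sigma)<0$, one has $H(\sigma)=w'(\sigma)\bigl[kw''(\sigma)+f(w(\sigma))\bigr]>0$, whence $kw''(\sigma)+f(w(\sigma))<0$, i.e. $w''''(\sigma)=-\bigl(kw''(\sigma)+f(w(\sigma))\bigr)>0$. Thus every zero of $w'''$ is a strict up-crossing, so $w'''$ can have at most one zero in $(m,m')$ (between two consecutive up-crossings it would have to change sign, a contradiction). Since $w'''(m)<0$, and since at $m'$ the facts $w'<0$ to the left and $H(m')>0$ give $w''(m')>0$ and hence $w'''(m')>0$, there is exactly one zero $\tau$, with $w'''<0$ on $[m,\tau)$ and $w'''>0$ on $(\tau,m']$.

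Part i then follows: $w''$ is strictly decreasing on $[m,\tau]$ and strictly increasing on $[\tau,m']$, with $w''(m)<0<w''(m')$, so $w''$ has a unique zero $r\in(\tau,m']$, giving the stated pattern and $\tau<r$. At $r$ the relation $H(r)=f(w(r))w'(r)>0$ with $w'(r)<0$ yields $f(w(r))<0$, i.e. $w(r)<0$. For part ii I would compare $G$ at the two critical points via $G(\xi)=2F(w(\xi))-E$: strict monotonicity of $G$ gives $F(w(m'))>F(w(m))$, while $w(m')<w(m)$; as $F$ is increasing on $[0,\infty)$ this is impossible unless $w(m')<0$, so $w$ has a unique zero $z\in(m,m')$, and $w(r)<0$ forces $z<r$.

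The hard part will be the remaining inequality $z<\tau$. Purely pointwise information at $\tau$ is insufficient: $H(\tau)>0$ only gives $kw''(\tau)+f(w(\tau))<0$, which is compatible with $w(\tau)\ge 0$ whenever the term $kw''(\tau)$ is sufficiently negative, so the sign of $w(\tau)$ cannot be read off directly. The plan instead is to prove that $w'''<0$ on all of $(m,z]$, which places the unique zero $\tau$ of $w'''$ beyond $z$. The natural tool is the primitive $P:=w'''+kw'$, for which $P'=-f(w)$; on $(m,z)$ one has $w>0$, so $P$ is strictly decreasing and stays below $P(m)=w'''(m)<0$. The genuine obstacle is to pass from $P<0$ to $w'''<0$, because $w'''=P-kw'$ and the contribution $-kw'>0$ works against us; I expect to resolve this by invoking the conserved energy at the maximum, $\tfrac12(w''(m))^2=F(w(m))-E$, together with the superlinear bound \eqref{f2int}, in order to control $|w''|$ against $f(w)$ and thereby force $w''''<0$ (equivalently, $w'''$ decreasing) on the portion where $w$ remains large. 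This is precisely where the structural hypothesis $k\le k_f$, i.e. $f'\ge k^2/4$, and the growth condition \eqref{f2int} should enter decisively, and securing $w'''<0$ up to $z$ completes the ordering $z<\tau<r$.
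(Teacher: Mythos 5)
Up to the final ordering $z<\tau$, your argument is correct and runs on the same engine as the paper's proof: since $0\le k\le k_f$ the function $H$ is nondecreasing, so $H\ge H(m)>0$ on $[m,m']$, and evaluating $H$ at the distinguished points gives $w''(m)<0$, $w'''(m)<0$, then $w''(m')>0$, $w'''(m')>0$, then $f(w(r))<0$ at the zero of $w''$, and $w''''>0$ at every interior zero of $w'''$. Your order of deduction is in fact tidier than the paper's: you first show that zeros of $w'''$ are strict up-crossings (hence there is exactly one, $\tau$), and then part i) falls out from the monotonicity of $w''$ on $[m,\tau]$ and on $[\tau,m']$; the paper instead proves i) first, defining $r$ as the first zero of $w''$ and excluding a further sign change of $w''$ before $m'$ by a contradiction argument with auxiliary points $r_*$, $\tau_*$, and only then reads off iii). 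Also, your detour through $G$ for part ii) is correct but unnecessary: once $w(r)<0$ is known, the intermediate value theorem and the strict monotonicity of $w$ on $(m,m')$ give $z$ at once, which is what the paper does.

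The genuine gap is exactly the one you flagged: $z<\tau$ is never proven, and the plan you sketch cannot be completed from the stated hypotheses, because it needs $w(m)$ to be large, which the lemma does not assume. The obstruction is intrinsic. Since $F(w(z))=0$, conservation of $\mathcal{E}$ gives at $z$
\begin{equation}
w'(z)\,w'''(z)=E+\tfrac12\,(w''(z))^2-\tfrac{k}{2}\,(w'(z))^2,
\end{equation}
and, as $w'(z)<0$, the desired $w'''(z)<0$ (equivalently $z<\tau$) is equivalent to $(w''(z))^2-k(w'(z))^2+2E>0$. Nothing in \eqref{f1}, \eqref{f2int}, \eqref{hextra1} and $0\le k\le k_f$ forces this. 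Concretely, take $f(t)=t^3+t$ (so $k_f=2$), $k=1$, and data $(w,w',w'',w''')(m)=(\delta,0,-A,-\varepsilon)$ with $\delta=10^{-2}$, $A=10^{2}$, $\varepsilon=10^{-3}$: then $H(m)=A\varepsilon>0$ and every hypothesis of the lemma holds, but $E=F(\delta)-A^2/2$ is hugely negative, and $w''''=-kw''-f(w)\approx kA>0$ while $w$ remains in $[0,\delta]$, so $w'''$ increases and vanishes at $\tau\approx \varepsilon/(kA)\approx 10^{-5}$, whereas $w$ vanishes only at $z\approx\sqrt{2\delta/A}\approx 1.4\times10^{-2}$. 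For such admissible data the conclusion $z<\tau$ fails; the ordering genuinely requires a largeness assumption on $w(m)$ relative to $E$, which is precisely the ingredient your plan (forcing $w''''<0$ via \eqref{f2int} and the energy while $w$ is large) would consume.

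For fairness, the paper's own treatment of this step is no more complete than yours: it asserts ``we know \dots\ $w'''(z)<0$'' with no justification, and by the equivalence above that assertion is the whole content of $z<\tau$. The inequality does hold in the regime where the lemma is actually applied, namely along the sequence $m_j$ with $w(m_j)\to+\infty$, since then \eqref{f2int} and the energy give $|w''(m)|\le C\,(w(m))^{(p+1)/2}$, hence $f(w(m))\gg k|w''(m)|$ and $w''''<0$ near $m$ --- this is the computation \eqref{sinal quarta derivada maximo} in the subsequent lemma, and it is there that your strategy can be carried through. So your diagnosis of where the difficulty sits, and of what input is needed, is accurate; but as a proof of the lemma as stated, part iii) remains open at $z<\tau$, for your argument and, indeed, for the paper's.
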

\begin{proof} 
\noindent \textit{Proof of i) and ii) } Since $w$ is a nontrivial solution of \eqref{edo1} we know that the critical points of $w$ are isolated. Observe that \eqref{explosao a direita} guarantees that $w$ has infinitely many critical points greater than $m$. So $m'$, the next critical point of $w$ greater than $m$, is well defined. Moreover, since $m$ is a local maximum of $w$, we have $w'<0$ on $(m,m')$ and $w'(m')=0$.
By hypothesis $H(m) = w''(m)w'''(m) > 0$. Since $m$ is a local maximum, we infer that $w''(m)<0$ and $w'''(m)<0$. Then there exists $\varepsilon > 0$ such that $w''<0$ on $[m, m+\varepsilon)$.

Set $r = \sup \{s>m; w''<0$ on $[m, s) \}$. Observe that Proposition \ref{teoremaexplode}, namely \eqref{explosao a direita}, guarantees that $w$ has a local maximum at some point $\xi> m$. Hence, $r< + \infty$ and $w''(s)< 0$ on $[m,r)$ and $w''(r)= 0$.

Since $m'> m$ is the next critical point of $w$ after $m$, it follows that $m< r< m'$.
 
Next we prove that $w(r)<0$. Indeed, since $H(r) = f(w(r))w'(r) > 0$ and $w'(r) < 0$ we infer that $f(w(r))<0$ and hence that  $w(r) <0$. Consequently we also have $w(m')< 0$ and there exists $m < z< r$ such that $w(s)>0$ on $[m,z)$ and $w(s)<0$ on $(z,m']$.

Now we prove that $w''> 0$ on $(r,m']$. Observe that $w'<0$ on $(m, m')$, $w'(m') =0$ and $H(m')= w''(m') w'''(m')>0$. Hence $w''(m')>0$ and $w'''(m')>0$. Therefore, $m'$ is a local minimum of $w$. 

Observe that if there exists $\xi \in (m, m')$ such that $w'''(\xi)=0$, then
\[ 
0 < H(\xi) = -w'(\xi)w{''''}(\xi), 
\]
and then $w''''(\xi)> 0$. This implies that all the critical points of $w''$ on $(m,m')$ are strict local minima. Therefore 
\begin{equation}\label{unicidade w''' igual a zero}
w''' \ \ \text{vanish at most at one point in}  \ \ (m,m').
\end{equation}

We recall that, by definition, $w''(r)=0$ and $w''<0$ em $[m,r)$. Then $w'''(r) \ge 0$. Since $w'''(m)< 0$ and $w'''(m')> 0$, we know that there exists $\tau \in (m,r]$ such that $w'''(\tau) =0$. Moreover, from \eqref{edo1},
\[ w{''''}(r) = - f(w(r))>0, \]
which implies $w'''>0$ and $w''>0$ on $(r, r+\varepsilon)$ and  for some $\varepsilon >0$. Set 
\[
r_* = \sup \{s > r; w''>0 \ \text{em} \ (r,s) \}.
\]

Then by \eqref{explosao a direita} we know that $r< r_*< + \infty$, $w''(r_*)=0$
and $w''>0$ em $(r, r_*)$. Hence, $w'''(r_*) \le 0$. By contradiction suppose that $r_*<m'$.  Then $w'(r_*)<0$ and from
\[
0 < H(r_*) = f(w(r_*)w'(r_*) = - w''''(r_*)w'(r_*) 
\]
we infer that $w{''''}(r_*)>0$.  Then $w'''<0$ on $(r_*- \varepsilon_*, r_*)$ for some $\varepsilon_*>0$. Therefore, $w''(r) = w''(r_*) = 0$ and there exist $\tau_*\in (r, r_*)$ such that
\[
0 < \max_{s \in [r, r_*]} w''(s) = w''(\tau_*).
\]
Hence $w'''(\tau_*) = 0$. Therefore, there exist $\tau \in (m, r]$ and $\tau_* \in (r, m')$ such that $w'''(\tau) = w'''(\tau_*) =0$, which contradicts \eqref{unicidade w''' igual a zero}.

\medbreak
\noindent \textit{Proof of  iii) } We have shown that there exits $\tau \in (m, r]$ such that $w'''(\tau) = 0$. We know that $w'<0$ em $(m, m')$. Then from \eqref{unicidade w''' igual a zero} we conclude that
\[
w''' < 0 \ \ \text{on} \ \ [m, \tau) \ \ \text{and} \ \ w'''> 0 \ \ \text{on} \ \ ( \tau, m'].
\]

Now we prove that $z < \tau < r$. Indeed we know $w''(z) < 0$, $w''(r) =0 $ and $w'''(z) < 0$. Hence the minimum of $w''$ on $[z,r]$ is attained on a point in $(z,r)$ which is necessarily $\tau$.
\end{proof}

\begin{remark}\label{obs sinais contrarios}
Similar properties holds on any interval on the right of a local minimum $m$ of $w$ such that $w(m)< 0$ and $H(m)> 0$. In this case the function $w,w', w''$ and $w'''$ will have the inverse sign. From Lemma \ref{lema10}, we obtain the Fig. \ref{graficow} below and have described the behavior of a solution $w$ of \eqref{edo1} in between two consecutive local maxima.
\end{remark}
\begin{figure}[h]\label{graficow}
\begin{center}
\includegraphics[width=0.8\textwidth]{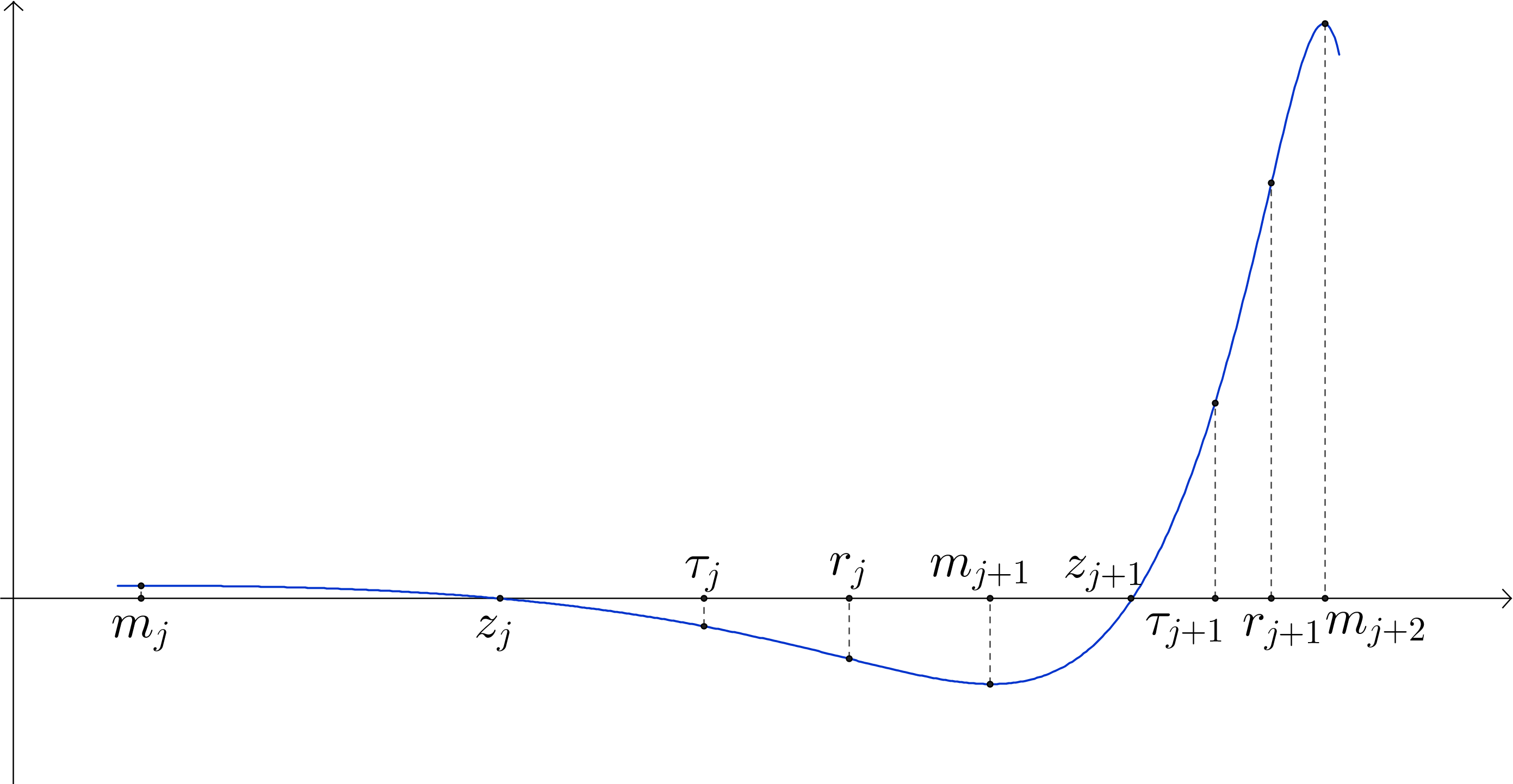}
\end{center}
\vspace{-.6cm}
\caption{Behavior of $w$ in between two local maxima}
\end{figure}

To our purposes we also need to have information about the fourth and fifth derivatives of $w$.
\begin{lemma}\label{lema quarta e quinta derivadas}
Let $0 < k \leq k_f$ and assume that $f$ satisfies \eqref{f1}, \eqref{f2int}, \eqref{hextra1} and \eqref{hextra2}. Let $w = w(s)$ be a local solution of \eqref{edo1} on a neighborhood of $s =0$ and defined on the maximal interval on the right $[0,R)$. Let $m\geq 0$ be a local maximum of $w$ with $w(m)>0$ and $H(m)>0$. Let $z, \tau, r, m'$ as in Lemma \ref{lema10}. Then {\rm{(}}since $w(m)$ becomes larger and larger by \eqref{explosao a direita}{\rm{)}} we have:
\begin{enumerate}[i)]
\item $w^{(v)} > 0$ on $[m,\tau]$.
\item There exists $m<\theta<z$ such that $w''''< 0$ on $[m, \theta)$ and $w''''> 0$ on $(\theta,r]$.
\end{enumerate}
\end{lemma}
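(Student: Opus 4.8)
The plan is to run everything off the two identities obtained by differentiating \eqref{edo1}, namely
\[ w''''=-kw''-f(w), \qquad w^{(v)}=-kw'''-f'(w)w', \]
reading the signs of the right-hand sides from the data already furnished by Lemma \ref{lema10}. Recall that this gives the ordering $m<z<\tau<r<m'$ together with $w'<0$ on $(m,m')$, $w>0$ on $(m,z)$ and $w<0$ on $(z,m')$, $w''<0$ on $(m,r)$, $w'''<0$ on $(m,\tau)$ and $w'''>0$ on $(\tau,m')$, as well as the values $w'(m)=0$, $w''(m)<0$, $w'''(m)<0$ established in the proof of Lemma \ref{lema10}. I would also record at the outset that \eqref{hextra1} and \eqref{hextra2} together yield $f'(t)\ge f'(0)>0$ for every $t$, so that $f'(w)>0$ everywhere.

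For i), on $[m,\tau]$ one has $w'''\le 0$ (with equality only at $\tau$) and $w'\le 0$ (with equality only at $m$). Hence in $w^{(v)}=-kw'''-f'(w)w'$ both summands are nonnegative, and at each point at least one is strictly positive: $-kw'''>0$ on $[m,\tau)$ since $k>0$, while at $\tau$ one has $-f'(w)w'>0$ because $w'(\tau)<0$ and $f'>0$. Therefore $w^{(v)}>0$ on $[m,\tau]$; in particular $w''''$ is strictly increasing there.

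For ii) the crucial and hardest point is $w''''(m)<0$; the remaining endpoint signs follow at once from $w''''=-kw''-f(w)$. Indeed $w(z)=0$ gives $w''''(z)=-kw''(z)>0$; at $\tau$ and $r$ one has $w\le 0$ and $w''\le 0$, so $w''''(\tau)>0$ and $w''''(r)=-f(w(r))>0$. To treat $m$ I would invoke the conservation of $\mathcal{E}$: evaluating at the critical point $m$ yields $(w''(m))^2=2\bigl(F(w(m))-E\bigr)$, so, since $w''(m)<0$ and $f(w(m))>0$, the desired inequality $-kw''(m)<f(w(m))$ is equivalent to $k^2\bigl(2F(w(m))-2E\bigr)<f(w(m))^2$. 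This is exactly where \eqref{f2int} enters: for $t>0$ it gives $f(t)\ge \rho t^{p}$ and $F(t)\le \tfrac{\alpha}{q+1}t^{q+1}+\tfrac{\beta}{p+1}t^{p+1}$, and because $p>q\ge 1$ forces $2p>p+1$, the right-hand side $f(w(m))^2$ grows strictly faster than the left-hand side. Since \eqref{explosao a direita} makes $w(m)$ arbitrarily large along the maxima, the inequality holds and $w''''(m)<0$.

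Finally I would assemble the statement. As $w''''$ is strictly increasing on $[m,\tau]$ with $w''''(m)<0<w''''(\tau)$, there is a unique $\theta\in(m,\tau)$ with $w''''(\theta)=0$, $w''''<0$ on $[m,\theta)$ and $w''''>0$ on $(\theta,\tau]$; and since $w''''(z)>0$ with $z\in(m,\tau)$, monotonicity forces $\theta<z$. To propagate positivity across $(\tau,r]$, where i) no longer applies, I would argue by contradiction: if $w''''(\xi)=0$ for some $\xi\in(\tau,r)$ then $w''(\xi)=-f(w(\xi))/k>0$ because $w(\xi)<0$, contradicting $w''<0$ on $(m,r)$. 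Hence $w''''$ has no zero on $(\tau,r)$, and with $w''''(\tau)>0$ and $w''''(r)>0$ continuity gives $w''''>0$ on $(\tau,r]$. Combining, $w''''<0$ on $[m,\theta)$ and $w''''>0$ on $(\theta,r]$ with $\theta\in(m,z)$, as claimed. The main obstacle is precisely the estimate $w''''(m)<0$, the only place where the superlinear growth \eqref{f2int} and the unboundedness \eqref{explosao a direita} are genuinely needed.
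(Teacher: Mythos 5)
Your proposal is correct and takes essentially the same route as the paper: part i) from the differentiated equation $w^{(v)}=-kw'''-f'(w)w'$ with the signs from Lemma \ref{lema10}, and part ii) by combining the energy identity $(w''(m))^2=2\bigl(F(w(m))-E\bigr)$ with the growth condition \eqref{f2int} and the largeness of $w(m)$ from \eqref{explosao a direita} to get the key inequality $w''''(m)<0$, then propagating positivity of $w''''$ across $[\tau,r]$ from the equation itself. The only differences are cosmetic: you square the comparison $-kw''(m)<f(w(m))$ where the paper extracts the bound $|w''(m)|\le C_2(w(m))^{\frac{p+1}{2}}$ and substitutes it directly, and you handle $(\tau,r]$ by contradiction where the paper simply observes that $w''$ and $f(w)$ are both negative there.
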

\begin{proof}
If we differentiate \eqref{edo1} we get 
\begin{equation}\label{edo'}
w^{(v)} (s)= -kw'''(s)-f'(w(s))w'(s).
\end{equation}
From Lemma \ref{lema10} we know that the two terms on the right hand side of \eqref{edo'} are positive in $(m, \tau)$ and we infer that $w^{(v)}(s) > 0$ on $[m,\tau]$. Therefore $w''''$ has at most one zero on $[m , \tau]$. Since $w{''''}(z) = -kw''(z)  >0$ we infer that $w{''''}>0$ on $[z,\tau]$.

Observe that, from \eqref{explosao a direita}, we already know that $a^+ = + \infty$. So, with no loss of generality  we may assume that $w(m)>>1$. Then from  the energy function $\mathcal{E}$ and from \eqref{f2int} we get
\begin{equation} \label{d1}
\frac{\rho}{p+1}(w(m))^{p+1} - E \! \leq \! \frac{(w''(m))^2}{2} \! = \! F(w(m)) - E \le  \! \!\left(\frac{\alpha}{q+1} + \frac{\beta}{p+1}\right) \!(w(m))^{p+1} - E,
\end{equation}
and hence there exist constants $C_1 > 0$ and $C_2> 0$ such that
\begin{equation}\label{d2}
C_1 (w(m))^{\frac{p+1}{2}} \leq |w''(m)| \leq C_2 (w(m))^{\frac{p+1}{2}}.
\end{equation}

Now we estimate $w{''''}(m)$. From (\ref{edo1}) and \eqref{f2int} we get
\begin{equation}\label{sinal quarta derivada maximo}
w{''''}(m) = -f(w(m)) - kw''(m)  \le - \rho (w(m))^p + k\,C_2 (w(m))^{\frac{p+1}{2}}.
\end{equation}
Therefore, since $w(m)> >1$ and $p>1$, we have that $w{''''}(m)<0$. Then we conclude that there exists a unique $\theta \in (m,z)$ such that $w{''''}<0$ on $[m, \theta)$ and $w{''''}>0$ on $(\theta, \tau]$. 

Now, from \eqref{edo1} we obtain that $w''''$ may not vanish on $[\tau,r]$ because $w''(s)$ and $f(w(s))$ are both negative on this interval. Therefore, we infer that $w''''>0$ on $[m,r]$. 
\end{proof}

\begin{remark} Assume all the hypotheses from Lemma \ref{lema quarta e quinta derivadas} and let $m< \theta < z < \tau<  r<  m'$ be as in Lemmas \ref{lema quarta e quinta derivadas} and \ref{lema10}. From Lemma \eqref{lema quarta e quinta derivadas} ii) we know that $w''''>0$ on $[\tau, r]$. Therefore we get that
\[
w'''(r) > - \frac{w''(\tau)}{r - \tau} > 0
\]
and hence
\begin{equation}\label{rel terceira e segunda em r e tau}
 (w'''(r))^2 > \frac{(w''(\tau))^2}{(r- \tau)^2} = \frac{G(\tau) - E}{(r- \tau)^2}.
\end{equation}

On the other hand, from \eqref{edo1},
\[
w{''''}(s)+ k w''(s) = - f(w(s)) > 0 \quad \text{on} \quad [z, m'] 
\]
and so
\[
w'''(s) w''''(s) + k w''(s)w'''(s) = - f(w(s)) w'''(s) > 0 \quad \text{on} \quad (\tau, m').
\]
Hence the function
\[
s \mapsto (w'''(s))^2 + k (w''(s))^2 \quad \text{is strictly increasing on} \quad [\tau, m']  
\]
and, in particular,
\begin{equation}\label{rel em r e m'}
(w'''(m'))^2 + k (w''(m'))^2 > (w'''(r))^2.
\end{equation}
Therefore, from \eqref{rel terceira e segunda em r e tau} and \eqref{rel em r e m'} we get
\begin{equation}\label{des importante1 para blow up}
(w'''(m'))^2 + k (w''(m'))^2 > \frac{G(\tau) - E}{(r - \tau)^2}.
\end{equation}
Hence, since $G(s) \to + \infty$ as $s \rt R^+$, we have $G(\tau)- E>0$.
\end{remark}

\section{On the finite space blow up: Proof of Theorem \ref{teoremaprincipal}}

We recall that if $w$ is a solution of \eqref{edo1} then so is $\overline{w}(s) = w(-s)$. Moreover $H_{w}(0) = - H_{\overline{w}}(0)$. Therefore, in Theorem \ref{teoremaprincipal}, the case $ii)$ follows from the case i).

Let $w = w(s)$ be a local solution of \eqref{edo1} on a neighborhood of $s =0$ and defined on the maximal interval on the right $[0,R)$. Assume also that $H(0)\geq 0$. Since $0 < k \leq k_f$, we know that $H'(s)\geq 0$ for every $s$. Then, since $w$ is nontrivial a nontrivial solution of \eqref{edo1}, we infer that $H>0$ on $(0,R)$.

In this part we have to prove some technical estimates. The proof of some of these estimates are similar to some in \cite[Section 7]{gazzolapavani2013} and in this case we will refer to \cite{gazzolapavani2013} for more details. We will be careful to mention the similarities between the computations below and those in \cite{gazzolapavani2013} as well as to stress the distinct and crucial ones. 

\medbreak\noindent\textsc{Step 1: Construction of the sequences $(m_j)$, $(z_j)$, $(\tau_j)$ and $(r_j)$.}

\noindent From Proposition \ref{teoremaexplode}, Lemma \ref{lema10} and Remark \ref{obs sinais contrarios} we obtain the sequence $(m_j)$ of all the critical points of $w$ after a certain fixed $s_0 \geq0$ in such way that
\begin{align*}
w(m_{2k-1}) < 0, \quad w(m_{2k}) > 0, \quad w''(m_{2k-1}) > 0, \quad w''(m_{2k}) < 0,\\
m_{j} \to R, \quad w(m_{2k-1}) \to - \infty \quad \text{and}  \quad w(m_{2k}) \to + \infty.
\end{align*}
Moreover there are sequences $(z_j)$, $(\tau_j)$ and $(r_j)$ such that
\[ m_j < z_j< \tau_j < r_j < m_{j+1} \quad \forall \ j \in \mathbb{N} \]
in such way that $(z_j)$, $(m_j)$, $(r_j)$ and $(\tau_j)$ are, respectively, the sequences of all zeros of $w$, $w'$, $w''$ and $w'''$. We also recall that
\[
w''(m_j) w'''(m_j) = H(m_j) >0 \quad \forall \ j \in \N.   
\]

We set $M_j = |w(m_j)|$ for $j \in \N$. 

\medbreak\noindent\textsc{Step 2: There exists $C>0$ such that}
\begin{equation}\label{m--z} z_j - m_j \le C M_j^{\frac{1-p}{4}}. \end{equation}

This estimate is a consequence of the concavity/convexity of $w$ on $[m_j, z_j]$, which was proved in Lemma \ref{lema10}. Then the arguments in \cite[Step 4 at p. 743]{gazzolapavani2013} apply. We stress that although $k$ is negative in \cite{gazzolapavani2013}, the estimates in this part only involves $|k|$.

\medbreak\noindent\textsc{Step 3: The limit}
\begin{equation}\label{limitezeroo} \lim_{j \to \infty} r_j - z_j = 0 \quad \textsc{holds}.\end{equation}
Here our arguments are slightly different from those in \cite[Step 2 at p. 740]{gazzolapavani2013}. The main difference is that our auxiliary function $G$ is not the same as in \cite{gazzolapavani2013}. 

We consider the case that $w$ has a local maximum at $m_j$, which according to our notation corresponds to any even $j$. The same arguments apply to the case that $w$ has local minimum at $m_j$.

Suppose by contradiction that 
\[
\limsup\limits_{j \to \infty}(r_j-z_j)>0. 
\]
Then there exists $a>0$ and a subsequence (denoted with the same index) such that $r_j - z_j \ge a$, for all $ j>j_0$. Set the function $h(s)=(s-z_j)^3(z_j+a-s)^4$. Multiply \eqref{edo1} by $h$. Then simple integration by parts yields 
\begin{multline}\label{lim1}
0 = \int_{z_j}^{z_j+a} h(s) (w{''''}(s) + kw''(s)+ f(w(s)) ds \\= \int_{z_j}^{z_j+a} h(s)f(w(s)) + (k h''(s)+h{''''}(s))w(s) ds.
\end{multline}
On the other hand, since $w$ is strictly concave on $[z_j, r_j]$, $ w'$ is decreasing, and
\[ w(z_j+s) = \int_0^s w'(z_j+t) dt \ge w'(z_j)s, \quad \forall \ s \in (z_j, r_j).\]
Once more using that $w$ is concave on $(m_j, z_j)$, we infer that
\[ w'(z_j) \ge \frac{M_j}{z_j-m_j}. \] 
Then, from Step 2, we get
\[ w'(z_j) \ge CM_j^{1+ \frac{p-1}{4}} = CM_j^{\frac{p+3}{4}},\]
and that
\begin{equation}\label{lim2}
w(z_j + s) \geq C M_{j}^{\frac{p+3}{4}}s, \quad \forall \ s \in (z_j, r_j).
\end{equation}
On the other hand, observe that
\[ h(s)f(w(s))+(kh''(s)+h{''''}(s)) w \ge h(s)f(w(s)) - Cw(s), \]
with $C=  \sup\limits_{[z_j,a]} |kh''(s)+h{''''}(s)|$ that does not depend on $j$. Then, for every $0< \varepsilon < a$, we have
\begin{multline}\label{lim3}
\int_{z_j + \varepsilon}^{z_j + a} h(s)f(w(s)) ds\ge h(z_j + \varepsilon ) \int_{z_j + \varepsilon}^{z_j + a} w (z_j + s)^p ds\\\ge (a - \varepsilon) h (z_j + \varepsilon ) C \varepsilon^p M_j^{\frac{p(p+3)}{4}}.
\end{multline} 
Then from \eqref{lim3},
\begin{multline*}
\int_{z_j }^{z_j + a} h(s)f(w(s)) + (kh''(s)+h{''''}(s))w(s) ds  \ge  \int_{z_j}^{z_j+a} h(s)f(w(s))-Cw(s) ds \\
 \ge  \int_{z_j + \varepsilon }^{z_j + a} h(s)f(w(s)) - C \int_{z_j }^{z_j + a} w(s)ds 
 \ge  (a - \varepsilon ) \varepsilon ^p Ch(z_j + \varepsilon )  M_j^{\frac{p(p+3)}{4}} - C \, a \,  M_j. 
\end{multline*}

Then, for $j$ suitably large, the integral becomes positive, which yields a contradiction.

\medbreak\noindent\textsc{Step 4: There exists $C>0$ such that}
\begin{equation}\label{z--r} r_j - z_j \le C M_j^{\frac{1-p}{4}}. \end{equation}
Since \eqref{limitezeroo} is established, the argument follows as in \cite[Step 3 at p. 741]{gazzolapavani2013}.

\medbreak\noindent\textsc{Step 5: There exists $C>0$ such that}
\begin{equation}\label{r--m} m_{j+1} - r_j \le C M_j^{\frac{1-p}{4}}. \end{equation}
The proof of this step follows as in \cite[Step 5 at p. 745]{gazzolapavani2013}. We stress that the information about $w''$ given at Lemma \ref{lema10} is crucial here.

\renewcommand{\thefootnote}{\fnsymbol{footnote}}

\medbreak\noindent\textsc{Step 6: We show that $R<\infty$.}

\noindent We mention that plan to prove this step is the same as in \cite{gazzolapavani2013}. However, the main part of the arguments in this step is very different from those in \cite[Step 6 p. 747]{gazzolapavani2013}. The reason is that in the case of $k>0$ we cannot guarantee that the function
\[
\Phi(s) = \frac{(w''(s))^2}{2} + F(w(s))
\]
is convex\footnote{In the case of $k\leq 0$ it is easy to verify that $\Phi$ is a convex function.}. 

Since $G$ is increasing, we infer that $(G(m_j))$ is an increasing sequence. Then
\begin{equation}\label{l3}
F(w(m_j)) = \frac{G(m_j)- E}{2} < \frac{G(m_{j+1})-E}{2} = F(w(m_{j+1})), 
\end{equation}
which shows that $(F(w(m_j)) )$ is also increasing.

To finish the proof, as we will see below,  it is enough to show that there exists $i_0 \in \N$ such that
\begin{equation}\label{l1}
F(w(m_{i+1})) > 2 F(w(m_{i-1})), \quad \forall \ i \geq i_0.
\end{equation}

We stress that we were not able to prove that
\[
F(w(m_{i})) > 2 F(w(m_{i-1})), \quad \forall \ i \geq i_0,
\]
for some sufficiently large $i_0$. The reason is that we could not prove that $|w'''(m_i)| > |w''(m_i)|$ for all $i$ sufficiently large. Nevertheless, we will see that \eqref{l1} is sufficient to prove that $R< +\infty$.

\medbreak\noindent\textsc{Case 1: Assume that $(w'''(m_{i}))^2 \leq (w''(m_{i}))^2$. Then \eqref{l1} holds.}

\noindent From \eqref{des importante1 para blow up}, we infer that
\[
(1 + k) (w''(m_{i}))^2 \geq \frac{G(\tau_{i-1}) - E}{(r_{i-1} - \tau_{i-1})^2} > \frac{G(m_{i-1})- E}{(r_{i-1} - \tau_{i-1})^2} = \frac{(w''(m_{i-1}))^2}{(r_{i-1} - \tau_{i-1})^2}
\] 
and hence
\[
(1+k) 2 (F(w(m_{i})) - E) > \frac{2 (F(w(m_{i-1})) - E)}{(r_{i-1} - \tau_{i-1})^2},
\]
which yields
\[
F(w(m_{i})) > 2 F(w(m_{i-1}))
\]
for every $i$ sufficiently large, because $F(w(m_j)) \to + \infty$ and $|r_{i-1} - \tau_{i-1}| \to 0$ as $j \rt + \infty$. Then,  since $F(w(m_j))$ is increasing, we get that
\[
F(w(m_{i+1})) > 2 F(w(m_{i-1})) 
\]
which is precisely \eqref{l1}.
\medbreak\noindent\textsc{Case 2: Assume that $(w'''(m_{i}))^2 > (w''(m_{i}))^2$. Then \eqref{l1} holds.}

\noindent We define
\[ N_0 = \{ j \in \mathbb{N}; 2F(w(m_{j})) \le F(w(m_{j+1})) \ \ \text{and} \ \ (w'''(m_{j}))^2 > (w''(m_{j}))^2 \}, \]
\[ N_1 = \{ j \in \mathbb{N}; 2F(w(m_{j})) > F(w(m_{j+1})) \ \ \text{and} \ \ (w'''(m_{j}))^2 > (w''(m_{j}))^2\}. \]

Observe that if $i \in N_0$ then we are done because $F(w(m_j))$ is increasing. Hence we will prove that $N_1$ is bounded.

First observe that if $\Phi (m_{j}) < \Phi (z_{j})$ is satisfied then $j \in N_0$\footnote{In the case of $k\leq 0$, the inequality $\Phi (m_j) < \Phi (z_j)$ is always satisfied. As a consequence $N_1 = \emptyset$.}. Indeed,
\begin{align}\label{l4}
2F(w(m_{j})) & = \Phi (m_{j}) + E < \Phi (z_{j}) + E = \frac{(w''(z_{j}))^2}{2} + E \nonumber \\
& < \frac{(w''(\tau_{j}))^2}{2} + E = \frac{G(\tau_{j})}{2} + \frac{E}{2} < \frac{G(m_{j+1})}{2} + \frac{E}{2}= F(w(m_{j+1})).
\end{align}

Now let $j \in N_1$. Then
\begin{equation}\label{contra}
F(w(m_{j+1})) < 2F(w(m_{j})).
\end{equation}
Since $G$ is convex, we infer that
\begin{equation}\label{final1}
G(z_{j}) > G(m_{j})+ H(m_{j})(z_{j}-m_{j}),
\end{equation}
which we rewrite as
\begin{align}\label{final2}
\Phi (z_{j}) + \frac{k(w'(z_{j}))^2}{2} & > \Phi (m_{j}) + w''(m_{j}) w'''(m_{j}) (z_{j}-m_{j}) \nonumber \\
& \ge \Phi (m_{j}) + w''^2 (m_{j})(z_{j}-m_{j}),
\end{align}
where the last inequality comes from the hypothesis $(w'''(m_{j}))^2 > (w''(m_{j}))^2$.

\medbreak\noindent\textsc{Claim: For every sufficiently large $j$
\noindent
\begin{equation}\label{final3}
\frac{k(w(z_j))^2}{2} < (w''(m_j))^2(z_j-m_j).
\end{equation}
}

We consider the case that $w$ has a local maximum at $m_j$, which according to our notation corresponds to any even $j$. The same arguments apply to the case that $w$ has local minimum at $m_j$.

Indeed, from the concavity of $w'$, we have
\[ w''(z_j) < \frac{w'(z_j)}{z_j-m_j}, \]
which yields
\begin{equation}\label{final4}
\frac{k}{2} (w''(z_j))^2(z_j-m_j)^2 > \frac{k}{2} (w'(z_j))^2.
\end{equation}
On the other hand,
\begin{multline}\label{hhhhh}
\frac{k}{2} (z_j-m_j)(w''(z_j))^2  < \frac{k}{2} (z_j-m_j)(w''(\tau_j))^2 = \frac{k}{2} (z_j-m_j)(G(\tau_j)-E) \\
 < \frac{k}{2}(z_j-m_j) (G(m_{j+1})-E) = k (z_j - m_j)(F(w(m_{j+1}))-E) \\
 < k(z_j-m_j)(2F(w(m_j))-E) = k (z_j - m_j) \frac{(w''(m_j))^2+E}{(w''(m_j))^2} (w''(m_j))^2.
\end{multline}
Then observe that
\[ k(z_j-m_j) \frac{(w''(m_j))^2+E}{(w''(m_j))^2} \to 0 \quad \text{as} \quad j \rt +\infty.\]
Then, from \eqref{hhhhh} and \eqref{final4}, there exists $j_0 \in \mathbb{N}$ such that 
\[ \frac{k(w'(z_j))^2}{2} < \frac{k}{2} (w''(z_j))^2(z_j-m_j)^2 < (w''(m_j))^2(z_j-m_j) \quad \forall \ j \geq j_0, \]
which proves \eqref{final3}.

Hence, if $j \geq j_0$ then \eqref{final2} and \eqref{final3} yield $\Phi(z_j) > \Phi (m_j)$, which as observed above implies that $j \in N_0$, which then leads to a contradiction. Therefore $N_1$ is bounded.

\medbreak At this point we have concluded the proof of \eqref{l1}.

\medbreak
So, using the notation $M_j = |w(m_j)|$, we get from \eqref{l1} that 
\[
F(M_{2j+j_0})>2^jF(M_{j_0}) \quad \forall \ j \geq1.
\]

On the other hand, by \eqref{f2int},
\[ F(M_j) \le \frac{\alpha}{q+1} M_j^{q+1} + \frac{\beta}{p+1} M_j^{p+1} = \frac{\beta}{p+1} M_j^{p+1}(\alpha\beta^{-1}\frac{p+1}{q+1} M_j^{q-p}+1). \]
Then, since $q-p<0$, there exists a positive constant $C$ such that
\begin{equation}\label{FmenorM}
(F(M_j))^{-1} \ge C M_j^{-(p+1)} \quad \forall \ j \geq j_0.
\end{equation}

Now we use the inequalities from Step 2, 4 and 5 and the inequality \eqref{FmenorM}. In addition, we recall that $(F(w(m_j)))$ is an increasing sequence. Then we infer that
\begin{multline*}
m_{2j+j_0+1} - m_{j_0+1}  =  \sum_{l=1}^j m_{j_0+2l+1}-m_{j_0+2l-1} \le C \sum_{l=1}^j ( M_{j_0+2l}^{\frac{1-p}{4}}+ M_{j_0+2l-1}^{\frac{1-p}{4}} ) \\ \le C \sum_{l=1}^j ( F(M_{j_0+2l})^{\frac{1}{4}\frac{1-p}{p+1}} + F(M_{j_0+2l-1})^{\frac{1}{4}\frac{1-p}{p+1}} )  \leq 2C \sum_{l=1}^j  F(M_{j_0+2l-2})^{\frac{1}{4}\frac{1-p}{p+1}} \\
 \le 2 C \sum_{l=1}^j (F(M_{j_0})2^{l-1})^{\frac{1}{4}\frac{1-p}{p+1}} = \bar{C} \sum_{l=1}^j (2^{\frac{1}{4}\frac{1-p}{p+1}})^{l-1}. 
\end{multline*}

Then taking the limit as $j \to \infty$ we get that $m_{2l+j_0+1}-m_{j_0+1} \to R-m_{j_0+1}$ and the series from the right hand side converges because $p>1$. Therefore we conclude that $R < + \infty$.

\section{Periodic solutions beyond the threshold $k_f$: The proof of Theorem \ref{periodics}}

\renewcommand{\a}{a}
\renewcommand{\b}{b}
\newcommand{\q}{\sigma}
\renewcommand{\r}{r}
In this part we prove the existence of periodic solutions to \eqref{edo1} by using the topological shooting technique. Here we use some ideas developed in \cite{peletiertroy1998}. We also mention that similar results were proved in \cite[Chapter 9]{peletiertroy} for the particular case of $f(t) = t + t^3$ and we stress that some different arguments are needed, in particular, to include the case of $f'(0)=0$. 

Consider the initial value problem 
\begin{equation}\label{pvisegundo}
\left\{
\begin{array}{l}
w{''''}+k w'' + f(w) =0,\\
(w,w',w'',w''')(0)=(0, \a, 0, \b).
\end{array}\right.
\end{equation}
For some $\a, \b\in\mathbb{R}$ given, if there exist two critical points $m>0>m^*$ of the solution $w$, such that 
\begin{equation}\label{mms}
w'(m)=w'(m^*) = 0, \quad w'''(m)=w'''(m^*)=0,
\end{equation}
then the extension of $w$ to $[2m^*,2m]$ defined by 
\[
w(s)=\left\{\begin{array}{rl}
w(2m-s), & m\leq s <2m\\
w(s), & m^*\leq s<m\\ 
w(2m^*-s), & 2m^*\leq s \leq m^*
\end{array}\right.
\]
solves \eqref{pvisegundo} on $[2m^*,2m]$. From this definition, $w$ and its derivatives up to the third order coincide at $2m$ and $2m^*$. Then by the unicity of solution of \eqref{pvisegundo}, $w$ is a periodic solution of \eqref{pvisegundo} as required with period $2m-2m^*$. 

If $f$ is odd, and there exists $m>0$ such $w'(m) = w'''(m) = 0$, then the odd extension to $[-m,m]$ solves \eqref{pvisegundo}, and we may take $m^*=-m$ to get \eqref{mms}, thus obtaining a periodic solution of \eqref{pvisegundo}. Such a point $m$ is called a \textit{point of symmetry} of $w$.

Notice that $\a$ and $\b$ are related by means of the energy function, namely,
\[  E = \a \b + k \a^2/2. \]
If $\a = 0$, then necessarily $E=0$ and $\b$ is a free parameter. On the other hand, if $\a\neq 0$, we can write
\[ \b = \b(\a ) = \frac E\a- \frac{k\a} 2 \]
and take $\a$ as parameter (given an energy value $E$). 
In what follows we will consider the latter case.

\begin{remark}${}$

\begin{enumerate}[a)]
\item The solution that we construct in the proof of Theorem \ref{periodics} has exactly two critical points in each period. Such a solution is called a single-bump solution.
\item From the choice of initial value, the solution $w$ of \eqref{edo1} satisfies $H(0)=0$ and $H'(0)>0$. The function $H$ no longer needs to be monotone as $k>k_f$.
\end{enumerate}
\end{remark}
\medbreak
\begin{proof}[\textbf{Proof of Theorem \ref{periodics}}]
Given $\a>0$ take $b = - k a/2$. Denote by $w(s,\a )$ the value of the solution of \eqref{pvisegundo} at the point $s\geq 0$. Consider 
\[  m(\a ) = \sup \{t>0 ; \ w'(s,\a )>0, \ \forall\ s\in[0,t)\}.  \]
By \cite[Theorem 4]{berchiofgk} or \cite[Lemma 24]{berchiofgk}, according to whether or not the solution is globally defined, we know that $w$ changes sign infinitely many times, and so $0< m(\a ) < + \infty$ and  $w'(m(\a ), \a )=0$.

We just need to verify that the function
\[ \phi(\a ) = w'''(m(\a ), \a )
\]
has a root on $(0,+\infty)$. Observe that $\phi$ is continuous inasmuch as the solution depends continuously on the initial data. From Lemma \ref{agrande} $\phi(\a )<0$ for large values of $\a$, whereas by Lemma \ref{apequeno}, $\phi(\a )>0$ if $\a$ is sufficiently small and these concludes the proof of Theorem \ref{periodics}.
\end{proof}

\begin{lemma}\label{agrande} If $f$ satisfies \eqref{f1} and \eqref{f2int}, then there exists $\a_1>0$, such that $\a>\a_1$ implies  ${w'''(m(\a ),\a )<0}$.
\end{lemma}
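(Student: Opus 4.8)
The plan is to show that for large initial slope $\a$, the solution $w(\cdot,\a)$ reaches its first critical point $m(\a)$ with $w'''(m(\a),\a)<0$, and the natural mechanism is an energy/rescaling argument. Recall that along the solution the energy is $E=\a\b+k\a^2/2=-k\a^2/2$ since $\b=-k\a/2$, so the prescribed initial data forces $E<0$ and $E\to-\infty$ as $\a\to+\infty$. The idea is that for very large $\a$ the nonlinearity $f$, which by \eqref{f2int} grows at least like $\rho|t|^p$ with $p>1$, dominates the linear term $kw''$, so that on the first hump the equation behaves like the undamped equation $w''''+f(w)=0$; for that equation one can read off the sign of $w'''$ at the first critical point. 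I would therefore begin by introducing a scaling that normalizes the amplitude.

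First I would rescale. Set $\lambda=\lambda(\a)\to+\infty$ and write $w(s)=\lambda^{a}v(\lambda^{b}s)$ with exponents $a,b$ chosen so that, to leading order, $v$ solves the scale-invariant limit problem $v''''+|v|^{p-1}v=0$ (the subcritical term $|t|^{q-1}t$ and the term $kw''$ should both be lower order under this scaling because $q<p$ and $p>1$). Matching the initial condition $(w,w',w'',w''')(0)=(0,\a,0,-k\a/2)$ under this scaling, one checks that the rescaled third-derivative initial datum tends to $0$, so the limiting profile $v$ satisfies $(v,v',v'',v''')(0)=(0,1,0,0)$ after normalizing the slope. By continuous dependence on initial data and parameters (the same continuity invoked in the proof of Theorem \ref{periodics}), $v(\cdot,\a)$ converges, uniformly on compact time intervals, to the solution $v_\infty$ of the limit problem with these data.

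Next I would analyze the limit problem $v_\infty''''+|v_\infty|^{p-1}v_\infty=0$ with $(v_\infty,v_\infty',v_\infty'',v_\infty''')(0)=(0,1,0,0)$. This is a clean autonomous problem: its conserved energy is $v'v'''-\tfrac12(v'')^2+\tfrac1{p+1}|v|^{p+1}$, equal to $0$ here. One shows $v_\infty$ increases to a first critical point $\bar m_\infty>0$ where $v_\infty'(\bar m_\infty)=0$, and that at that point $v_\infty'''(\bar m_\infty)<0$ strictly. Indeed, at the first maximum $v_\infty''\le 0$ and $v_\infty>0$, so $v_\infty''''=-|v_\infty|^{p-1}v_\infty<0$ just before $\bar m_\infty$; tracking the signs of the derivatives as in Lemma \ref{lema10} (applied to the limit equation with $k=0$, which is permitted since $k_f=0$ when $p>1$) forces $v_\infty'''<0$ strictly at $\bar m_\infty$. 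Then I would transfer this back: convergence of $v(\cdot,\a)$ to $v_\infty$ in $C^3$ on a compact interval containing $\bar m_\infty$ gives convergence of the first critical point $\bar m(\a)\to\bar m_\infty$ and hence $v'''(\bar m(\a),\a)<0$ for $\a$ large, which after undoing the scaling yields $w'''(m(\a),\a)<0$ for all $\a>\a_1$.

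The main obstacle I anticipate is controlling the convergence of the \emph{location} of the first critical point, i.e. ensuring $m(\a)$ stays in a fixed compact interval and does not escape to infinity or collapse to $0$ as $\a\to+\infty$, together with verifying that the limiting critical value $v_\infty'''(\bar m_\infty)$ is strictly negative rather than merely nonpositive (a nondegeneracy issue). Strictness is essential because one needs an open sign condition to survive the perturbation; I would secure it by showing $v_\infty$ has no inflection in $v'''$ before its first maximum, so that $v_\infty'''$ decreases monotonically from $0$, exactly the monotonicity structure guaranteed by Lemma \ref{lema10}. Once strict negativity and the compactness of $\{m(\a)\}$ are in hand, the sign transfer is routine.
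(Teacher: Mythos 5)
Your proposal follows essentially the same route as the paper's proof: rescale so that the slope at $0$ is normalized to $1$, pass to the limit $a\to+\infty$ in which the $kw''$ term and the subcritical part of the nonlinearity become lower order and the initial data converge to $(0,1,0,0)$, observe that the limit profile $V$ satisfies $V''''<0$, hence $V'''<0$ and $V''<0$, for as long as $V>0$, so that $V'''(T)<0$ \emph{strictly} at the first critical point $T$, and then transfer the sign back by continuous dependence. The two worries you flag are handled in the paper exactly as you suggest: strictness comes from integrating $V''''<0$ against $V'''(0)=V''(0)=0$, and the convergence $m(a)\sim a^{-\sigma}T$ of the (rescaled) first critical point follows from the transversality $V''(T)<0$; finiteness of $T$ is obtained by citing the $k=0$ results of \cite[Theorems 2 and 4]{gazzolapavani2013}.

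Two inaccuracies should be corrected, neither of which breaks the argument. First, with $b=-ka/2$ the energy is $E=ab+ka^{2}/2=0$, not $-ka^{2}/2$; this only appears in your motivation, and the rescaled datum $v'''(0,a)=a^{r-3\sigma}b(a)\to 0$ either way (indeed the paper remarks that the lemma holds for any fixed $E$). Second, and more substantively: the lemma assumes only \eqref{f1} and \eqref{f2int}, so $f$ is \emph{not} the prototype power, and the rescaled nonlinearity $a^{r-4\sigma}f(a^{-r}v)$ (with $\sigma=\frac{p-1}{p+3}$, $r=-\frac{4}{p+3}$) need not converge to $|v|^{p-1}v$ as you claim. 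The paper instead names the limit $g$ and uses only the properties that survive the rescaling: $g$ is Lipschitz, $g(t)t>0$, and $\rho|t|^{p+1}\le g(t)t\le\bar\beta|t|^{p+1}$ (the $|t|^{q+1}$ contribution disappears because its rescaled coefficient carries the factor $a^{r(p-q)}\to 0$). Your sign analysis of the limit problem uses exactly these properties — positivity of $g(v)v$ for the sign of $v''''$, superlinear growth for finiteness of the first critical point — so it goes through verbatim once the specific power is replaced by such a $g$; as written, though, the identification of the limit equation is unjustified for general $f$. A last small point: Lemma \ref{lema10} describes the behavior to the \emph{right} of a local maximum $m$ with $H(m)>0$, so it does not apply on the interval preceding the first critical point; the direct sign-tracking from $t=0$ is what is needed there, and it is also what the paper does.
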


\begin{proof}
Given $\q, \r\in\mathbb{R},$ consider the change of variables 
\[ t = \a^\q s, \quad v(t) = \a^\r w(s). \]
From \eqref{pvisegundo} we get 
\begin{equation}\label{pvialpha}
\left\{
\begin{array}{l}
v{''''}+k \a^{-2\q}v'' + \a^{\r-4\q}f(\a^{-\r}v) =0, \\ 
(v,v',v'',v''')(0)=(0, \a^{\r-\q+1}, 0, \a^{\r-3\q}\b(\a)).
\end{array}\right.
\end{equation}

Choosing $\r, \q$ such that 
\[  \r(1-p) -4\q =0, \quad \r-\q+1=0, \]
namely 
\[\q= \frac{p-1}{p+3}, \quad \r = -\frac{4}{p+3},\]
we get $v'(0,\a)=1$ for every $\a>0$ and 
\begin{align}
\lim_{\a\to+ \infty} v'''(0,\a)& = \lim_{\a\to\infty} \a^{\r-3\q}\b(\a) =\lim_{\a\to +\infty} E \a^{\r-3\q-1} - \frac k 2\a^{\r-3\q+1}\\
&=\lim_{\a\to+\infty} E \a^{-2-2\q} - \frac k 2\a^{-2\q}= 0.
\end{align}

Now consider 
\[ g(t) = \lim_{\a\to + \infty} \a^{\r-4\q}f(\a^{-\r}t). \]
Then $g$ is well defined as \eqref{f2int} gives us
\[ \rho |t|^{p+1}\leq g(t)t\leq \bar\beta |t|^{p+1}, \ \forall \ t\in\mathbb{R},\]
for some $\bar\beta\geq \beta$. Moreover, $g: \R \to \R$ is Lipschitz. In particular $g$ satisfies \eqref{f1}.

Therefore, as $\a\to + \infty$, we see that the problem
\begin{equation}\label{reesc}
\left\{
\begin{array}{l}
v{''''}+k \a^{-2\q}v'' + \a^{\r-4\q}f(\a^{-\r}v) =0, \\ 
(v,v',v'',v''')(0)=(0, 1, 0, \a^{\r-3\q}\b(\a)).
\end{array}\right.
\end{equation}
is a regular perturbation of 
\begin{equation}\label{limit}
\left\{
\begin{array}{l}
V{''''}+g(V) =0, \\ 
(V,V',V'',V''')(0)=(0, 1, 0, 0), 
\end{array}\right.
\end{equation}
so that $(v,v',v'',v''')(s,\a)\to (V,V',V'',V''')(s)$ as $\a\to+\infty$, and the convergence is uniform on bounded intervals.

Notice that, on the right hand side of $t=0$, the solution $V$ of \eqref{limit} satisfies
\begin{equation}\label{propriedadesV} V{''''}(t), \ V'''(t), \ V''(t) \textrm{ are negative, as long as $V$ remains positive.}\end{equation}
Set $T= \sup\{t>0; V'>0 \textrm{ over } (0,t)\}$. Then by \cite[Theorems 2 and 4]{gazzolapavani2013}, we know that $0 < T < + \infty$ and hence $V'(T) =0$. Moreover, from \eqref{propriedadesV} we know that ${V''(T)<0}$ and ${V'''(T)<0}$.

As $\a\to +\infty$, we get $m(\a)\sim \a^{-\q}T$ and 
\[w(m(\a), \a) \sim\a^{-\r}V(T), \quad \phi(\a) = w'''(m(\a),\a) \sim\a^{2\q-\r}V'''(T).\]
Observing that $\r<0$ and $2\q-\r = \q+1>0$ leads to 
\[w(m(\a),\a)\to + \infty, \quad \phi(\a)<0,\]
for sufficiently large $\a$.
\end{proof}

\begin{remark}
We stress that the above lemma above holds for any given $E \in \R$.
\end{remark}

\begin{lemma}\label{apequeno} Assume that $f$ is a $\mathcal{C}^1(\mathbb{R})$ function.
\begin{enumerate}[i)]
\item 
If $f$ satisfies \eqref{hextra1}, \eqref{hextra2}, $E=0$ and $\a>0$ is small enough, then ${w'''(m(\a ),\a )>0}$.
\item If $f$ satisfies \eqref{hextra1}, $f'(0)=0$, $E=0$ and $\a>0$ is small enough, then $w'''(m(\a ),\a )>0$.
\end{enumerate}
\end{lemma}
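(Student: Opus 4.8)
The plan is to treat both items through the amplitude rescaling already used in Lemma~\ref{agrande}, but now in the opposite regime $a\to 0^+$, identifying the limiting constant-coefficient shooting problem explicitly. Writing $a$ for the shooting parameter and setting $v(s)=w(s)/a$, the solution of \eqref{pvisegundo} with data $(0,a,0,-ka/2)$ is carried to
\begin{equation*}
v''''+kv''+\frac{f(av)}{a}=0,\qquad (v,v',v'',v''')(0)=(0,1,0,-k/2).
\end{equation*}
Since $f\in C^1(\R)$ and $f(0)=0$, on any interval where $v$ stays bounded one has $f(av)/a\to f'(0)\,v$ uniformly as $a\to0^+$, so this is a regular perturbation of
\begin{equation*}
V''''+kV''+f'(0)\,V=0,\qquad (V,V',V'',V''')(0)=(0,1,0,-k/2),
\end{equation*}
with $(v,v',v'',v''')\to(V,V',V'',V''')$ uniformly on bounded intervals. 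As $v'''=w'''/a$, it suffices to show that the first critical point $m(a)$ of $v$ has $v'''(m(a))>0$ for small $a$.

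For item i) I would use that, in the setting of Theorem~\ref{periodics}, $k>k_f=2\sqrt{f'(0)}$, so $\lambda^4+k\lambda^2+f'(0)=0$ has two conjugate pairs of purely imaginary roots $\pm i\omega_1,\pm i\omega_2$ with $0<\omega_1<\omega_2$, $\omega_1^2+\omega_2^2=k$ and $\omega_1^2\omega_2^2=f'(0)$. Imposing the data, the limit solution is
\begin{equation*}
V(s)=\frac{\sin(\omega_1 s)}{2\omega_1}+\frac{\sin(\omega_2 s)}{2\omega_2},\qquad V'(s)=\tfrac12\big(\cos(\omega_1 s)+\cos(\omega_2 s)\big).
\end{equation*}
Its first zero is $T=\pi/(\omega_1+\omega_2)$; it is transversal, since $V''(T)=-\tfrac12(\omega_1+\omega_2)\sin(\omega_1 T)<0$, and using $\omega_2 T=\pi-\omega_1 T$ one gets $V'''(T)=\tfrac12(\omega_2^2-\omega_1^2)\cos(\omega_1 T)>0$ because $\omega_1 T\in(0,\pi/2)$. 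As $V'>0$ on $(0,T)$, continuous dependence (or the implicit function theorem applied to $v'(s,a)=0$ at $(T,0)$) gives that $m(a)$ is the first critical point and $m(a)\to T$; hence $v'''(m(a))\to V'''(T)>0$ and $w'''(m(a),a)=a\,v'''(m(a))>0$ for small $a$.

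For item ii), with $f'(0)=0$ the limit degenerates to $V''''+kV''=0$, whose solution with the same data is $V(s)=s/2+\sin(\sqrt{k}\,s)/(2\sqrt{k})$, so $V'(s)=\tfrac12(1+\cos(\sqrt{k}\,s))\ge0$ vanishes at $T=\pi/\sqrt{k}$ but only tangentially ($V''(T)=0$), while $V'''(T)=k/2>0$. This tangential zero is the crux: continuity alone neither locates nor signs $m(a)$. To get past it I would work with $\psi=v-V$, which solves $\psi''''+k\psi''=-f(av)/a$ with zero data; Duhamel's formula with the impulse response $P(u)=\big(u-\sin(\sqrt{k}\,u)/\sqrt{k}\big)/k$ of $\partial^4+k\partial^2$ yields
\begin{equation*}
\psi'(s)=-\int_0^s P'(s-t)\,\frac{f(av(t))}{a}\,dt,\qquad P'(u)=\frac{1-\cos(\sqrt{k}\,u)}{k}\ge0.
\end{equation*}
If $v'>0$ on $(0,T)$ then $v>0$ there, so $f(av)/a>0$ and the integral is strictly negative at $s=T$; thus $v'(T)=V'(T)+\psi'(T)=\psi'(T)<0$, contradicting $v'(T)\ge0$. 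Hence $m(a)<T$ for every $a>0$, while $m(a)\ge T-\eps$ for small $a$ by uniform convergence and $V'>0$ on compact subsets of $(0,T)$; therefore $m(a)\to T$, so $v'''(m(a))\to V'''(T)=k/2>0$ and again $w'''(m(a),a)=a\,v'''(m(a))>0$ for small $a$.

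The genuinely delicate step is item ii): since $V'$ merely touches zero, the sign of the convolution kernel $P'\ge0$ together with the positivity $f(av)/a>0$ is precisely what is needed to push $v'$ strictly below its limit and so to place $m(a)$ just to the left of $T$; only then does continuous dependence recover $V'''(T)=k/2>0$. In item i) this obstruction is absent because $k>k_f$ renders the first critical point of the limit transversal.
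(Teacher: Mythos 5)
Your proof is correct and follows the same overall strategy as the paper: rescale $v=w/a$ (no time rescaling), pass to the linear limit problem $V''''+kV''+f'(0)V=0$ with data $(0,1,0,-k/2)$, compute the sign of $V'''$ at the first critical point $T$ of $V$, and transfer it back to $w$ for small $a$. For item i) your computation is essentially the paper's (your $\omega_1,\omega_2$ are its $\lambda_2,\lambda_1$); you merely make explicit two points it leaves implicit, namely that $T=\pi/(\omega_1+\omega_2)$, so $\omega_1 T\in(0,\pi/2)$ and $\cos(\omega_1 T)>0$, and the transversality $V''(T)<0$ which is what makes plain continuous dependence suffice. The genuine difference is in item ii). The paper computes $V(t)=t/2+\sin(\sqrt{k}\,t)/(2\sqrt{k})$, notes $V'(T)=0$ and $V'''(T)=k/2>0$ at $T=\pi/\sqrt{k}$, and ends with ``the conclusion follows as in the previous case''; but there $V'(t)=\tfrac12\bigl(1+\cos(\sqrt{k}\,t)\bigr)\ge 0$ and $V''(T)=0$, so the zero of $V'$ is tangential, and continuous dependence alone does not force the first critical point $m(a)$ of $v$ to converge to $T$: a priori $v'$ could remain positive past $T$ and $m(a)$ could drift to a later tangential zero or beyond. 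Your Duhamel step --- $\psi=v-V$ solves $\psi''''+k\psi''=-f(av)/a$ with zero data, and since the kernel satisfies $P'(u)=(1-\cos(\sqrt{k}\,u))/k\ge 0$ while $f(av)/a>0$ as long as $v>0$, one gets $v'(T)<0$ whenever $v'>0$ on $(0,T)$ --- is exactly the missing ingredient: it pins $m(a)<T$, and combined with $V'\ge c_\eps>0$ on $[0,T-\eps]$ it gives $m(a)\to T$, hence $v'''(m(a))\to V'''(T)=k/2>0$. So for item ii) your argument is more complete than the paper's: it buys a rigorous localization of $m(a)$ at the degenerate critical point, at the cost of one variation-of-parameters computation, whereas the paper's shortcut implicitly assumes the convergence $m(a)\to T$ that is only automatic in the transversal case i).
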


\begin{proof}
In order to study the behavior of the problem as $\a \to 0$, let us rescale as above with $\q=0$ and $\r=-1$, in this case we arrive at 
\begin{equation} \label{alfazero}
\left \{ \begin{array}{l} 
v^{iv} + kv''+ \a^{-1} f( \a v) = 0, \\
(v,v',v'',v''')(0) =(0,1,0, \a^{-1} \b ( \a )).
\end{array}\right.
\end{equation}

Letting  $ \a \to 0$ from above, we get 
\[ 
\lim_{\a \to 0} \a^{-1} \b ( \a) = \lim_{ \a \to 0} \Bigg( E \a^{-2} - \frac{k}{2} \Bigg) = \frac{-k}{2}, \quad \mbox{if } E=0,
\]

\begin{align*}
\lim_{\a \to 0} \a^{-1} f( \a v) &= \lim_{\a \to 0} \a^{-1} (f'(0) \a v + o (\a v)) \\
&= \lim_{\a \to 0} f'(0)v + \a^{-1} o(\a v) = f'(0)v.
\end{align*}
We then get the limit problem
\begin{equation}\label{limitezero} 
\left \{ \begin{array}{l}
V{''''} + kV''+ f'(0)V = 0, \\
(V,V',V'',V''')(0) = (0,1,0, -k/2).
\end{array}\right.
\end{equation}  

\medbreak
\noindent \textsc{Proof of \textit{i)}}
Under \eqref{hextra2}, the solution of problem \eqref{limitezero} is given explicitly by
\[
V(t) = C_1 \sin (\lambda_1 t)+ C_2 \sin (\lambda_2 t),
\]
where $\lambda_1$ and $\lambda_2$ are the imaginary part of the roots of the characteristic equation,
\[
\lambda_1 = \sqrt{\frac{k + \sqrt{k^2 - 4 f'(0)}}{2}}, \quad \lambda_2 = \sqrt{\frac{k - \sqrt{k^2-4f'(0)}} 2},\]
both of which are positive. 
From the initial condition, we get
\[
C_1 = \frac 1{2\lambda_1}, \quad C_2 = \frac 1{2\lambda_2}.
\]

Let $T$ be the first critical point of $V$ on $(0, \infty)$, so that $V(T)>0$ and $V'(T)=0$, that is,
\begin{equation}\label{VlinhaT}
\cos (\lambda_2 T) = - \cos (\lambda_1 T).
\end{equation}
Evaluating $V'''(T)$, we get
\begin{align*}
V'''(T) &= - C_1 \lambda_1^3 \cos (\lambda_1 T) -  C_2 \lambda_2^3 \cos (\lambda_2 T)  = - \frac12\lambda_1^2 \cos (\lambda_1 T)-\frac12\lambda_2^2 \cos(\lambda_2 T)\\
 &= \frac12\lambda_1^2 \cos (\lambda_2 T)-\frac12\lambda_2^2 \cos(\lambda_2 T) = 
 \frac12\sqrt{k^2-4f'(0)}\cos(\lambda_2 T)>0.
\end{align*}
Therefore, for small enough $\a$,
\[
w'''(m(\a),\a) \sim\a V'''(T)>0.
\]

\medbreak
\noindent \textsc{Proof of \textit{ii)}}
In this case the limit problem is
\begin{equation}\label{limitezerodois} 
\left \{ \begin{array}{l}
V{''''} + kV''= 0, \\
(V,V',V'',V''')(0) = (0,1,0, -k/2).
\end{array}\right.
\end{equation}  
The solution is then given by 
\[
V(t) = C_1 \sin (\sqrt{k} t)+ C_2 t,
\]
and the initial condition yields
\[C_1 =\frac1{2\sqrt{k}} , \quad C_2 = \frac12.\]

For any $T>0$ such that $\cos(\sqrt{k}T)=-1$, 
straight calculation shows that $V'(T)=0$ and
\[V'''(T) = -\frac k 2 \cos(\sqrt{k}t) = \frac k2>0.\]
Taking $T$ to be the first of such values, the conclusion follows as in the previous case.
\end{proof}

\end{document}